\documentclass[preprint,12pt]{elsarticle}
\UseRawInputEncoding
\usepackage{amssymb}
\usepackage{amscd,amssymb,latexsym,srcltx}
\usepackage{dsfont}
\usepackage{amsthm}
\usepackage{setspace}
\usepackage{mdwlist}
\usepackage{enumerate}
\usepackage{graphics}
\usepackage{graphicx}
\usepackage{epsfig}
\usepackage{subfigure}
\usepackage{amssymb}
\usepackage{mathrsfs}
\usepackage{amsmath}
\allowdisplaybreaks
\usepackage{amsfonts}
\usepackage{geometry}
 \usepackage{hyperref}
\usepackage{geometry}
\geometry{left=2.4cm,right=2.8cm,top=3.0cm,bottom=2.2cm}
\biboptions{sort&compress}% 设置\cite{}中括号中文献自动排序的.
\numberwithin{equation}{section}
\makeatother

\newtheorem{lemma}{Lemma}[section]

\newtheorem{theorem}{Theorem}[section]
\newtheorem{remark}{Remark}[section]
\newtheorem*{acknowledgments*}{ACKNOWLEDGMENTS}
\newtheorem{definition}{Definition}[section]

\newtheorem*{thm*}{Wedderburn-Artin's Theorem}
\newtheorem*{remark*}{Remark}
\newtheorem*{condition*}{Condition (C) }
\newtheorem*{lemma**}{Lemma 2}
\newtheorem*{example*}{[Example]}

\journal{}

\begin{document}

\begin{frontmatter}

\title{Estimate of the attractive velocity of attractors for some dynamical systems\tnoteref{t1}}
\tnotetext[t1]{Published on  Sci. Sin. Math., 52 (2022), 1--20, doi: 10.1360/SCM-2021-0470 (in Chinese).}

\author[label1]{Chunyan Zhao}
\ead{emmanuelz@163.com}
\author[label1]{Chengkui Zhong\corref{cor1}}
\ead{ckzhong@nju.edu.cn}
\author[label3]{Chunxiang Zhao}
\ead{zhaocxmath@163.com}
\address[label1]{Department of Mathematics, Nanjing University, Nanjing, 210093, China}
\address[label3]{Institute of Applied System Analysis, Jiangsu University,
Zhenjiang, 212013, China}

\cortext[cor1]{Corresponding author.}
\begin{abstract}
In this paper, we first prove an abstract theorem on the existence of polynomial attractors and the concrete estimate of their attractive velocity for infinite-dimensional dynamical systems, then apply this theorem to a class of wave equations with nonlocal weak damping and anti-damping in case that the nonlinear term~$f$~is of subcritical growth.
\end{abstract}

\begin{keyword}
Polynomial attractor \sep Attractive velocity \sep The polynomial decay with respect to noncompactness measure\sep Wave equation\sep Nonlocal weak damping

\MSC[2010] 35B40\sep 35B41 \sep 35L05
\end{keyword}

\end{frontmatter}

\section{Introduction}
In this paper, we first prove an abstract theorem on the existence of polynomial attractors and the concrete estimate of their attractive velocity for infinite-dimensional dynamical systems by estimating polynomial decay rate of noncompactness measure of bounded sets,
 then apply this theorem to a class of wave equations with nonlocal weak damping and anti-damping in case that the nonlinear term~$f$~is of subcritical growth. As far as we know, our work in this paper is the first one on polynomial attractors and polynomial decay with respect to noncompactness measure in the study of infinite-dimensional dynamical system.

It is a significant issue to  explore the long-term behavior of infinite-dimensional dynamical systems. The global attractor (see Definition~$\ref{def20-11-11-1}$)
is a core concept on this subject. By definition, if the global attractor exists, then it covers all possible permanent regimes of the system. Due to the~H\"{o}lder-Ma\~{n}\'{e} theorem (see \cite{Foias1996,MR1710097}), each
compact set with finite fractal dimension is homeomorphic to a compact subset of Euclidean space~$\mathds{R}^n$.
Therefore, if the fractal dimension of the
global attractor is finite, then the infinite-dimensional dynamical system restricted to the global attractor can be reduced to a finite-dimensional dynamical system (see \cite{MR2508165}).

However, as a characterization of the asymptotic dynamic behavior of the system, the global attractor has its limitations.
First, it may attract the trajectories at a low rate. Furthermore, it is very difficult to express the convergence rate in terms of the physical parameters. Second, a mathematical model is only an approximation
of the reality and thus an ideal mathematical object should be robust
under small perturbations. Nevertheless, the global attractor may be sensitive to perturbations due to its low attractive speed. In general, the global attractor is upper semicontinuous with respect to perturbations, whereas its lower semicontinuity is much more difficult to obtain. Finally, when the system has a global attractor
with finite fractal dimension, the reduced finite-dimensional dynamical system given by the H\"{o}lder-Ma\~{n}\'{e} theorem is only H\"{o}lder continuous, not necessarily Lipschitz continuous, so it is not necessarily generated by ordinary differential equations (see \cite{MR2508165,Robinson2001}).

 In order to overcome these limitations, Foias, Sell and Temam\cite{Inertialmanifolds} proposed the notion of inertial manifold in 1988. It is defined as a positively invariant finite-dimensional Lipschitz manifold which uniformly exponentially attracts all orbits starting from bounded subsets and contains the global attractor. The dynamical system restricted to the inertial manifold can be reduced to a Lipschitz continuous system of ordinary differential equations which is called the inertial form of the original system. Almost all known methods of constructing inertial manifolds are based on the ``spectral interval" condition (see \cite{Inertialmanifolds}), which is difficult to verify. The existence of inertial manifolds has been proved for a large number of equations with space dimension  1 or 2 (see\cite{MR966192,Inertialmanifolds,Robinson2001,MR1873467,TEMAM}). However, its existence is still an open
problem for several important equations, such as the two-dimensional incompressible Navier-Stokes equations. Furthermore, its nonexistence  has been proved for damped Sine-Gordon equations\cite{MR921912}.

As it is not always possible to obtain inertial manifolds, Eden, Foias, Nicolaenko and Temam\cite{MR1335230} proposed the concept of exponential attractor in 1994. An exponential attractor is a positively invariant, finite fractal dimensional compact set which uniformly exponentially attracts all orbits starting from bounded subsets and contains the global attractor. In other words, compared with inertial manifolds, smoothness is no longer required in the definition of exponential attractors. Actually, to the best of our knowledge, exponential attractors exist indeed for almost all equations with finite-dimensional global attractors.

A necessary prerequisite for a dynamical system to have an exponential attractor is that it possesses a global attractor with finite fractal dimension. On the other hand, many dynamical systems generated by evolution equations have infinite-dimensional global attractors, such as the~$p$-Laplace equations with symmetry (see \cite{MR2728546}), some reaction-diffusion equations in unbounded domains (see\cite{MR1871475,Zelik2003}), some hyperbolic equations in unbounded domains (see \cite{Zelik2001}), and so on. When a dynamical system has an infinite-dimensional global attractor, it has no exponential attractors, but may still
 have positively invariant and exponentially attractive compact sets. Based on this observation, Zhang, Kloeden, Yang and Zhong(\cite{Zhangjin1}) thought that the properties of exponential attractiveness and finite fractal dimension should be discussed separately, and proposed the concept of exponential decay with respect to noncompactness measure for the first time. They have proved that a sufficient and necessary condition for a dissipative dynamical system to have a positively invariant and exponentially attractive compact  set~$\mathcal{A}^*$~is that the noncompactness measure of bounded sets decays exponentially. They also gave some criteria for exponential decay with respect to noncompactness measure and proved this property for a class of reaction-diffusion equations and a class of wave equations with weak damping via the~$(C^{*})$~condition.

%We notice that for the semilinear wave equation or beam equation with nonlinear damping~$g(u_t)$, when~$g(0)=0$~and~$g'(0)=0$, there is no conclusion as to whether the fractal dimension of the global attractor is finite and whether the noncompactness measure decays exponentially. There are other equations that face the same problem in the degenerate case. Motivated by this, we put forward the more general concepts of ~$\varphi$-decay with respect to noncompactness measure and~$\varphi$-attractor (where the function~$\varphi:\mathds{R}^{+}\rightarrow\mathds{R}^{+}$~is decreasing and satisfies~$\varphi(t)\rightarrow 0$~as~$t\rightarrow +\infty$) as a generalization of the theory of exponential decay with respect to noncompact measure.
%There we proved that for every dynamical system which has the property of~$\varphi$-decay with respect to noncompactness measure, there exists a positively invariant compact set~$\mathcal{A}^*$~that attracts each bounded set~$B$~at the rate of~$\varphi\big(t-t_{*}(B)-1\big)$~(we call ~$\mathcal{A}^*$~a~$\varphi$-attractor).
%We also gave some criteria for the~$\varphi$-decay with respect to noncompactness measure, and
%applied the criterion based on the contractive function method to the wave equation
% \begin{equation}\label{20-10-14-1}
%u_{tt}-\Delta u+k||u_{t}||^p u_t+lu_t+f(u)
%=\displaystyle\int_{\Omega}K(x,y)u_{t}(y)dy+h(x)
%\end{equation}
%with a critically growing nonlinearity~$f$~to obtain its exponential decay estimate with respect to noncompactness measure.

Inspired by the above literature, we attempt to estimate the attractive velocity of the attractor for the degenerate wave equation
 \begin{equation}\label{20-10-14-1}
u_{tt}-\Delta u+k||u_{t}||^p u_t+f(u)
=\displaystyle\int_{\Omega}K(x,y)u_{t}(y)dy+h(x)
\end{equation}
by studying the decay rate of the noncompactness measure. Since we cannot obtain the exponential decay estimate with respect to noncompactness measure for this equation, so we conjecture that it can reach the polynomial decay rate and the system may correspondingly have a polynomial attractor. This is the main motivation for the present paper.

 With this problem in mind, we noticed that M. Nakao\cite{zbMATH03541237,nakao1976,MR513082} proved that the nonnegative function~$w(t)$~which satisfies the difference inequality
\begin{equation*}
  \sup_{s\in[t,t+1]}w(s)^{1+\alpha}\leq K_0(1+t)^{\gamma}\big(w(t)-w(t+1)\big)+g(t)
\end{equation*}
decays to zero at polynomial or logarithm-polynomial rate as~$t\rightarrow+\infty$.
Subsequently,  many mathematicians have used Nakao's inequality to obtain the polynomial or logarithm-polynomial decay  estimate of the energy functional for different evolution equations(see \cite{nakao1977,MR513082,SilvaNarcisoVicente} for example).
%Using such difference inequality, Nakao proved that any solution of the wave equation with nonlinear damping tends to a certain steady-state solution or solution orbit at polynomial rate in \cite{zbMATH03541237,nakao1976} and that the energy functionals
%of solutions of the abstract nonlinear evolution equations
%\begin{equation}\label{20-8-7-1}
%  u''(t)+B(t)u'(t)+A(t)u(t)=f(t)
%\end{equation}
%and
%\begin{equation}\label{20-10-25-1}
%  B(t)u'(t)+A(t)u(t)=f(t)
%\end{equation}
%decay to~$0$~at polynomial (or logarithm-polynomial) rate in~\cite{nakao1977,MR513082}, where~$A(t)$~is the Fr\'{e}chet
%derivative of a functional on the Banach space~$V$,
%~$B(t)$~is a bounded operator from the Banach space~$W$~to its dual~$W^{*}$~and~$V\hookrightarrow W\hookrightarrow H\hookrightarrow W^{*}\hookrightarrow V^{*}$. Recently, Silva, Narciso and Vicente\cite{SilvaNarcisoVicente} applied the difference inequality proposed by Nakao to prove the polynomial decay of the energy functional of solutions for the beam equation with nonlocal energy damping.

 In this paper, we establish a quasi-stable inequality concerning the controlling relationship of the distance at time $t$ and the  initial distance between any two orbits  starting from a positive invariant bounded absorbing set~$\mathcal{B}_0$. This quasi-stable inequality is closely related to a difference inequality and contains compact pseudo-metrics. Thus, by the definition of noncompactness measure and the compactness of the pseudo-metrics, from this quasi-stable inequality we can deduce the difference inequality concerning the noncompactness measure $\alpha(S(t)\mathcal{B}_0)$,  which  leads to the  estimate of the polynomial decay rate  with respect to the noncompactness measure.  Consequently, the existence of polynomial attractors and the estimate of their attractive rate are obtained.

When applying the abstract theorem to the concrete wave equation,  we use the strong monotone inequality in inner product space, the property of concave function and the method of energy reconstruction in the process of asymptotic estimation of energy to verify the quasi-stable condition. The energy estimation formula thus obtained naturally contains the difference item ~$E_z(0)-E_z(T)$  (where ~$E_z(t)$ denotes the energy of the difference~$z$ between two solutions at time~$t$), and consequently leads to the quasi-stable inequality closely related to the difference inequality.

As for the study of the attractive velocity of attractors for infinite dimensional dynamical systems, the existing literature has been limited to the exponential attractive velocity. Meanwhile, there have  been many results on the polynomial decay rate of energy functionals. To our best knowledge, we are the first to study polynomial attractors.

As regard to the approach, we combine the energy estimate, energy reconstruction, noncompactness measure and a kind of difference inequality leading to the  estimate of the decay rate of non-negative functions, while the traditional method is to construct appropriate Gronwall's inequalities by energy estimate.

The paper is organized as follows. We give some necessary preliminaries in Section 2. We estabilish an abstract theorem on the existence of polynomial attractors and the  explicit  estimate of their attractive velocity for some infinite-dimensional dynamical systems in Section 3, and then apply this abstract theorem to a class of wave equations with subcritical nonlinearity in Section 4.

 Throughout this paper,~$\Omega\subseteq \mathds{R}^{n}$ denotes a bounded domain with a sufficiently smooth boundary~$\partial\Omega$. We will denote the inner product and the norm on~$L^2(\Omega)$~by~$(\cdot,\cdot)$ and $\|\cdot\|$ respectively, and the norm on~$L^p(\Omega)$~by~$\|\cdot\|_p$. The symbols~$\hookrightarrow$~and~$\hookrightarrow\hookrightarrow$~stand for continuous embedding and compact embedding respectively.
The capital letter ``C" with a (possibly empty) set of subscripts will denote
a positive constant depending only on its subscripts and may vary from one
occurrence to another.
\section{Preliminaries}
In this section, we give some necessary preliminaries which are required for establishing
our results.

 We first briefly recall the definition of Kuratowski measure of noncompactness and its basic properties. For more details, we refer to \cite{K. Deimling,MR1153247}.

\begin{definition}\cite{K. Deimling,MR1153247}\label{def20-10-26-1}
 Let~$(X,d)$~be a metric space and let~$B$~be a bounded subset of~$X$.
The Kuratowski~$\alpha$-measure of noncompactness is defined by
\begin{equation*}
  \alpha(B) = \inf\{\delta>0 | B\   \text{has a finite cover of diameter} ~<\delta\}.
\end{equation*}
\end{definition}

\begin{lemma}\cite{K. Deimling,MR1153247}
Let~$(X,d)$~be a complete metric space and~$\alpha$~be the Kuratowski measure of noncompactness. Then
\begin{enumerate}[(i)]
  \item~$\alpha(B)=0 $~if and only if~$B$~is precompact;
  \item~$\alpha(A)\leq\alpha(B)$~whenever~$A\subseteq B$;
  \item~$\alpha(A \cup B)=\max\{\alpha(A), \alpha(B)\}$;
  \item~$\alpha(B)=\alpha(\overline{B})$, where~$\overline{B}$~is the closure of~$B$;
  \item~if~$B_1\supseteq B_2\supseteq B_3\ldots$~are nonempty closed sets in~$X$~such that~$\alpha(B_n)\rightarrow 0$~as~$n\rightarrow \infty$, then~$\cap_{n\geq1}B_n$~is nonempty and compact;
\item~if~$X$~is a Banach space, then~$\alpha(A+B)\leq \alpha(A)+\alpha(B)$.
\end{enumerate}
\end{lemma}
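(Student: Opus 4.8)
The plan is to prove the six properties in the order listed, since several of them feed into the later ones, and the whole argument rests on two elementary facts about the diameter: that $\operatorname{diam}(\overline{C})=\operatorname{diam}(C)$ for any bounded set $C$, and that in a Banach space $\operatorname{diam}(U+V)\le\operatorname{diam}(U)+\operatorname{diam}(V)$ by the triangle inequality. Together with completeness of $X$ these yield everything except the one substantive point in (v).

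For (i), I would observe that a finite cover of $B$ of diameter $<\delta$ is precisely a finite $\delta$-net, so $\alpha(B)=0$ says exactly that $\overline{B}$ is totally bounded; completeness of $X$ then upgrades total boundedness to compactness of $\overline{B}$, i.e.\ precompactness of $B$. The converse is immediate, since a compact $\overline{B}$ admits finite covers of arbitrarily small diameter. Property (ii) is then read straight off the definition: any finite cover of $B$ of diameter $<\delta$ also covers $A\subseteq B$, so the infimum defining $\alpha(A)$ ranges over a larger admissible set of $\delta$.

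Properties (iii), (iv) and (vi) I would deduce from (ii) plus the two diameter facts. For (iii), monotonicity gives ``$\ge$'', while for ``$\le$'' I would take finite covers of $A$ and of $B$ of diameter $<\delta$ for any $\delta>\max\{\alpha(A),\alpha(B)\}$ and union them. For (iv), ``$\le$'' is monotonicity and ``$\ge$'' comes from replacing each member of a finite cover of $B$ by its closure, which covers $\overline{B}$ without enlarging any diameter. For (vi), covers $\{U_i\}$ of $A$ and $\{V_j\}$ of $B$ produce the finite cover $\{U_i+V_j\}$ of $A+B$, and diameter subadditivity bounds each piece, giving $\alpha(A+B)\le\alpha(A)+\alpha(B)+\varepsilon$ for every $\varepsilon>0$.

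The one step needing genuine work is the nonemptiness assertion in (v); the compactness there is cheap, since $\alpha(\bigcap_n B_n)\le\alpha(B_n)\to0$ forces $\alpha(\bigcap_n B_n)=0$, so the closed precompact set $\bigcap_n B_n$ is compact by (i). For nonemptiness I would pick $x_n\in B_n$ and analyze the single set $E=\{x_n:n\ge1\}$: splitting off the finite initial segment $\{x_1,\dots,x_{N-1}\}$ (which contributes nothing by (iii)) and noting that the tail $\{x_n:n\ge N\}$ lies in $B_N$, property (ii) gives $\alpha(E)\le\alpha(B_N)$ for every $N$, hence $\alpha(E)=0$. By (i) the sequence $(x_n)$ then has a subsequence converging to some $x$, and since each $B_m$ is closed and contains all but finitely many terms of that subsequence, $x\in\bigcap_m B_m$. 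I expect this subsequence-extraction argument, together with the bookkeeping showing that the tail controls the measure of the whole sequence, to be the only place where real care is needed, though it too is routine.
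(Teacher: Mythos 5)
The paper offers no proof of this lemma at all: it is quoted verbatim from the cited references (Deimling; Akhmerov et al.), so there is no in-paper argument to compare against. Your proof is correct and is essentially the standard textbook argument from those sources: (i)--(iv) and (vi) from the definition plus the two diameter facts, and (v) via the diagonal-sequence trick of bounding $\alpha(\{x_n:n\geq 1\})$ by $\alpha(B_N)$ for every $N$ using (ii) and (iii), then extracting a convergent subsequence whose limit lies in every closed $B_m$. One small point of care in (i): a cover by finitely many sets of diameter $<\delta$ and a finite $\delta$-net are not literally the same thing (passing from a net to a cover by balls doubles the diameter bound), but since you only need the equivalence of $\alpha(B)=0$ with total boundedness, this factor of $2$ is harmless and your conclusion stands.
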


Next, we will briefly review the definitions and fundamental conclusions of dynamical systems and the global attractor.
\begin{definition}\cite{Robinson2001}
  A dynamical system is a pair of objects~$(X, \{S(t)\}_{t\geq0})$~consisting of a
complete metric space~$X$~and a family of continuous mappings~$\{S(t)\}_{t\geq0}$~of X
into itself with the semigroup properties:
\begin{enumerate}[(i)]
  \item ~$S(0)=I$,
  \item ~$S(t+s)=S(t)S(s)$~for all~$t,s\geq0$,
\end{enumerate}
where~$X$~is called a phase
space (or state space) and~$\{S(t)\}_{t\geq0}$~is called an evolution semigroup.
\end{definition}

\begin{definition}\cite{Robinson2001}
Let~$\{S(t)\}_{t\geq0}$~be a semigroup on a complete metric space~$(X,d)$. A closed set~$\mathcal{B}\subseteq X$~is said to be absorbing for~$\{S(t)\}_{t\geq0}$~iff for any bounded
set~$B\subseteq X$~there exists~$t_0(B)$~$($the entering time of~$B$~into~$\mathcal{B}$$)$ such that~$S(t)B\subseteq \mathcal{B}$~for all~$t>t_0(B)$.
 $\{S(t)\}_{t\geq0}$~is said to be dissipative iff it possesses a bounded absorbing set.
\end{definition}

\begin{lemma}\cite{Chueshov2008}\label{20-5-5-2}
Let~$\{S(t)\}_{t\geq0}$~be a semigroup on a complete metric space~$(X,d)$. If~$\{S(t)\}_{t\geq0}$ is dissipative, then it possesses a positively invariant bounded absorbing set. To be more precise, let~$ \mathcal{B}$~be its bounded absorbing set, then~$ \mathcal{B}_0=\overline{\bigcup\limits_{t\geq t_{\mathcal{B}}}S(t)\mathcal{B}}$~is a positively invariant bounded absorbing set, where~$t_{ \mathcal{B}}>0$~is the entering time of~$ \mathcal{B}$~into itself.
\end{lemma}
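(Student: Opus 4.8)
The plan is to verify, one at a time, the three defining properties of a positively invariant bounded absorbing set for the candidate
$$\mathcal{B}_0=\overline{\bigcup_{t\geq t_{\mathcal{B}}}S(t)\mathcal{B}},$$
relying only on the semigroup laws of Definition 2.3, the continuity of each map $S(t)$, and the absorption property of the given bounded absorbing set $\mathcal{B}$. Throughout I write $U=\bigcup_{t\geq t_{\mathcal{B}}}S(t)\mathcal{B}$, so that $\mathcal{B}_0=\overline{U}$.

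First I would settle boundedness. By the meaning of the entering time $t_{\mathcal{B}}$ of $\mathcal{B}$ into itself we have $S(t)\mathcal{B}\subseteq\mathcal{B}$ for every $t\geq t_{\mathcal{B}}$, and therefore $U\subseteq\mathcal{B}$. Since $\mathcal{B}$ is closed, passing to closures yields $\mathcal{B}_0=\overline{U}\subseteq\overline{\mathcal{B}}=\mathcal{B}$, so $\mathcal{B}_0$ is a nonempty closed set contained in the bounded set $\mathcal{B}$, hence bounded.

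Next, positive invariance. Fix $\tau\geq0$. For any $x=S(t)b\in U$ with $t\geq t_{\mathcal{B}}$ and $b\in\mathcal{B}$, the semigroup property gives $S(\tau)x=S(\tau+t)b$; as $\tau+t\geq t_{\mathcal{B}}$, this element again lies in $U$, so $S(\tau)U\subseteq U$. To pass to the closure I would invoke the continuity of $S(\tau)$: if $x\in\overline{U}$ and $x_n\to x$ with $x_n\in U$, then $S(\tau)x_n\to S(\tau)x$, whence $S(\tau)x\in\overline{S(\tau)U}\subseteq\overline{U}=\mathcal{B}_0$. This shows $S(\tau)\mathcal{B}_0\subseteq\mathcal{B}_0$ for all $\tau\geq0$.

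Finally, absorption. Given any bounded set $B$, let $t_0(B)$ be its entering time into $\mathcal{B}$, so that $S(s)B\subseteq\mathcal{B}$ for all $s>t_0(B)$. For $t>t_0(B)+t_{\mathcal{B}}$ put $s=t-t_{\mathcal{B}}>t_0(B)$ and factor $S(t)=S(t_{\mathcal{B}})S(s)$ by the semigroup law; then
$$S(t)B=S(t_{\mathcal{B}})\big(S(s)B\big)\subseteq S(t_{\mathcal{B}})\mathcal{B}\subseteq U\subseteq\mathcal{B}_0 ,$$
so $\mathcal{B}_0$ absorbs $B$ with entering time $t_0(B)+t_{\mathcal{B}}$. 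The two semigroup computations are routine; the only point demanding care is the passage to the closure in the invariance step, where the continuity of $S(\tau)$ is precisely what legitimizes $S(\tau)\overline{U}\subseteq\overline{S(\tau)U}$, and one must also keep the strict-versus-nonstrict entering-time inequalities consistent. I expect this closure argument to be the main (though mild) obstacle.
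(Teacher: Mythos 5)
Your proposal is correct, and in fact the paper offers no proof of this lemma at all --- it is quoted with a citation to Chueshov--Lasiecka --- so there is nothing to diverge from: your three-step verification (boundedness via $U\subseteq\mathcal{B}$ and closedness of $\mathcal{B}$, invariance via the semigroup law plus continuity to pass to $\overline{U}$, absorption via the factorization $S(t)=S(t_{\mathcal{B}})S(t-t_{\mathcal{B}})$) is exactly the standard argument behind the cited result. The only wrinkle, which you already flagged, is that the paper's definition of absorbing uses the strict inequality $t>t_0(B)$, so to have $S(t)\mathcal{B}\subseteq\mathcal{B}$ for all $t\geq t_{\mathcal{B}}$ one simply replaces $t_{\mathcal{B}}$ by any strictly larger value --- a harmless adjustment that does not affect the proof.
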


\begin{definition} \cite{Robinson2001}\label{def20-11-11-1}
A compact set~$\mathcal{A}\subseteq X$~is said to be a global attractor of the dynamical system~$(X, \{S(t)\}_{t\geq0})$~iff
\begin{enumerate}[(i)]
  \item~$\mathcal{A}\subseteq X$~is an invariant set, i.e.,~$S(t)\mathcal{A}=\mathcal{A}$~for all~$t\geq0$,
  \item~$\mathcal{A}\subseteq X$~is uniformly attracting, i.e., for all bounded set~$B\subseteq X$~we have
  \begin{equation*}
\lim_{t\rightarrow +\infty}\mathrm{dist}(S(t)B,\mathcal{A})=0,
\end{equation*}
where~$\mathrm{dist}(A,B):=\sup_{x\in A}\mathrm{dist}_{X}(x,B)$~is the Hausdorff semi-distance.
\end{enumerate}
\end{definition}

Ma, Wang and Zhong put forward the concept of~$\omega $-limit compact in \cite{MWZH} and proved that~$\omega $-limit compactness is a necessary and sufficient condition for a dissipative dynamical system to possess a global attractor.

\begin{definition}
The dynamical system~$(X, \{S(t)\}_{t\geq0})$~is called to be~$\omega$-limit compact iff for every positively invariant bounded set~$B\subseteq X$~we have~$\alpha\big(S(t)B\big)\rightarrow 0$~as~$t\rightarrow\infty$, where~$\alpha(\cdot)$~is the Kuratowski measure of noncompactness.
\end{definition}

\begin{theorem}\cite{MWZH}
The dynamical system~$(X, \{S(t)\}_{t\geq0})$~has a global attractor in~$X$~if and only if it is both dissipative and ~$\omega$-limit compact.
\end{theorem}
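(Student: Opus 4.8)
The plan is to prove the two implications separately. Necessity is routine; the substance lies in sufficiency, where from dissipativity and $\omega$-limit compactness I must \emph{construct} a global attractor, and the natural candidate is the $\omega$-limit set of a positively invariant bounded absorbing set.

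\emph{Necessity.} Suppose a global attractor $\mathcal{A}$ exists. First I would verify dissipativity: $\mathcal{A}$ is compact, hence bounded, and since it uniformly attracts every bounded set, any closed bounded neighborhood of it, for instance $\mathcal{B}=\{x\in X:\mathrm{dist}_X(x,\mathcal{A})\leq 1\}$, is a bounded absorbing set, so the system is dissipative. For $\omega$-limit compactness, let $B$ be positively invariant and bounded. Uniform attraction gives $\mathrm{dist}(S(t)B,\mathcal{A})=\varepsilon_t\to 0$, so $S(t)B$ lies in the $\varepsilon_t$-neighborhood of $\mathcal{A}$. Since $\mathcal{A}$ is compact it admits a finite $\varepsilon_t$-net, whence its $\varepsilon_t$-neighborhood, and therefore $S(t)B$, is covered by finitely many balls of diameter at most $4\varepsilon_t$; thus $\alpha(S(t)B)\leq 4\varepsilon_t\to 0$.

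\emph{Sufficiency: construction.} Now assume the system is dissipative and $\omega$-limit compact. By Lemma~\ref{20-5-5-2} there is a positively invariant bounded absorbing set $\mathcal{B}_0$, and I would set
\[
\mathcal{A}:=\bigcap_{t\geq 0}K_t,\qquad K_t:=\overline{\bigcup_{s\geq t}S(s)\mathcal{B}_0}.
\]
The sets $K_t$ are closed, contained in $\mathcal{B}_0$ by positive invariance (hence bounded), and nested decreasing. The key computation is that, writing $s=t+\tau$ and using $S(\tau)\mathcal{B}_0\subseteq\mathcal{B}_0$, one has $\bigcup_{s\geq t}S(s)\mathcal{B}_0=S(t)\bigl(\bigcup_{\tau\geq 0}S(\tau)\mathcal{B}_0\bigr)\subseteq S(t)\mathcal{B}_0$. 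Combining with properties (ii) and (iv) of the Kuratowski measure gives $\alpha(K_t)\leq\alpha(S(t)\mathcal{B}_0)\to 0$ by $\omega$-limit compactness, so property (v) of the measure guarantees that $\mathcal{A}$ is nonempty and compact.

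\emph{Attraction and invariance.} It remains to show $\mathcal{A}$ is uniformly attracting and invariant; I expect the reverse invariance inclusion to be the main obstacle. Attraction of $\mathcal{B}_0$ I would prove by contradiction: if $\mathrm{dist}(S(t_n)x_n,\mathcal{A})\geq\varepsilon$ for some $x_n\in\mathcal{B}_0$ and $t_n\to\infty$, then the tail $\{S(t_n)x_n\}_{n\geq N}\subseteq K_{t_N}$ has $\alpha\to 0$, hence is precompact, so a subsequence converges to a point lying in every $K_t$, i.e.\ in $\mathcal{A}$, contradicting the distance bound; attraction of an arbitrary bounded $B$ then follows since $S(t)B\subseteq\mathcal{B}_0$ for $t\geq t_0(B)$ and the semigroup property reduces to the previous case. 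For invariance I would use that $y\in\mathcal{A}$ iff $S(t_n)x_n\to y$ for some $x_n\in\mathcal{B}_0$, $t_n\to\infty$. The inclusion $S(t)\mathcal{A}\subseteq\mathcal{A}$ is immediate from continuity of $S(t)$, since $S(t)y=\lim S(t+t_n)x_n\in\mathcal{A}$. The reverse inclusion $\mathcal{A}\subseteq S(t)\mathcal{A}$ is the delicate step: given $y\in\mathcal{A}$ with $S(t_n)x_n\to y$, I would apply the same precompactness argument to the shifted sequence $\{S(t_n-t)x_n\}$ (defined for large $n$), extract a subsequence converging to some $z\in\mathcal{A}$, and conclude $S(t)z=\lim S(t)S(t_n-t)x_n=\lim S(t_n)x_n=y$, so $y\in S(t)\mathcal{A}$. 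The essential mechanism throughout is that $\omega$-limit compactness turns the bounded, a priori non-compact, trajectory tails into precompact sets, which is precisely what permits extraction of the limits needed for both the reverse invariance inclusion and the attraction property.
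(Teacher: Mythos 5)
Your proposal is correct, and since the paper states this theorem without proof (it is quoted from \cite{MWZH}), the only meaningful comparison is with that source: your construction $\mathcal{A}=\bigcap_{t\geq0}\overline{\bigcup_{s\geq t}S(s)\mathcal{B}_0}$, the estimate $\alpha(K_t)\leq\alpha\big(S(t)\mathcal{B}_0\big)\rightarrow0$ obtained from positive invariance of $\mathcal{B}_0$, and the use of tail precompactness to extract limits for both attraction and the reverse inclusion $\mathcal{A}\subseteq S(t)\mathcal{A}$ reproduce exactly the standard Ma--Wang--Zhong argument, adapted consistently to this paper's definition of $\omega$-limit compactness (which you correctly apply only to the positively invariant set $\mathcal{B}_0$). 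The single cosmetic repair needed is to pass first to a subsequence along which $t_n$ is increasing, so that the tails $\{S(t_n)x_n\}_{n\geq N}$ genuinely lie in $K_{t_N}$ in your contradiction and invariance steps; this is immediate and does not affect the argument.
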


The global attractor gives no information about the attractive rate. In order to describe the asymptotic behavior of dynamical systems more concretely, we propose the following concept of~$\varphi$-attractor.

\begin{definition}\label{def21-1-31-1}
Assume that~$\varphi:\mathds{R}^{+}\rightarrow\mathds{R}^{+}$~satisfies~$\varphi(t)\rightarrow 0$~as~$t\rightarrow +\infty$. We call a compact~$\mathcal{A}^*\subseteq X$~a~$\varphi$-attractor for the dynamical system~$(X, \{S(t)\}_{t\geq0})$, iff~$\mathcal{A}^*$~is positively invariant with respect to~$S(t)$~and there exists~$t_{0}\in \mathds{R}$~such that for every bounded set~$B\subseteq X$~there exists~$t_{B}\geq0$~such that
\begin{equation*}
\begin{split}
\mathrm{ dist}\left(S(t)B, \mathcal{A}^*\right)\leq \varphi(t+t_0-t_{B}),\ \forall t\geq t_{B}.
  \end{split}
\end{equation*}
In particular, if~$\varphi(t)=Ct^{-\beta}$~for certain positive constants~$C,\beta$, then~$\mathcal{A}^*$~is called a polynomial attractor.
\end{definition}

We emphasize that the finiteness of fractal dimension is not required in the above definition of~$\varphi$-attractor. This is because there indeed exist positively invariant compact sets with infinite fractal dimension which can attract all bounded sets at the~$\varphi$-speed for some dynamical systems.

We further propose the following concept of~$\varphi$-decay with respect to noncompactness measure as a condition for the existence of~$\varphi$-attractor.

\begin{definition}\label{def20-8-5-3}
The dynamical system~$(X, \{S(t)\}_{t\geq0})$~is called to be~$\varphi$-decaying with respect to noncompactness measure~$\alpha$ iff it is dissipative and there exists~$t_{0}>0$~such that
  \begin{equation}\label{20-8-5-1}
  \alpha(S(t)\mathcal{B}_0)\leq\varphi(t),\ \forall t\geq t_{0},
\end{equation}
where~$\mathcal{B}_0$~is a positively invariant bounded absorbing set of~$(X, \{S(t)\}_{t\geq0})$~and~$\varphi:\mathds{R}^{+}\rightarrow\mathds{R}^{+}$~is a decreasing function satisfying~$\varphi(t)\rightarrow 0$~as~$t\rightarrow +\infty$.

In particular, if~$\varphi(t)=Ct^{-\beta}$~$($or~$\varphi(t)=Ce^{-\beta t}$$)$ for certain positive constants~$C,\beta$, then~$(X, \{S(t)\}_{t\geq0})$~is said to be polynomially decaying $($or be exponentially decaying$)$ with respect to noncompactness measure~$\alpha$.
\end{definition}

The following  lemma shows that the decay rate of  noncompactness measure of a positively invariant bounded absorbing set determines the decay rate of noncompactness measure of any bounded set, which guarantees  the justification of Definition~$\ref{def20-8-5-3}$.

\begin{lemma}\label{lemma 8-20-5-12}
 Assume that the dynamical system~$(X, \{S(t)\}_{t\geq0})$~is~$\varphi$-decaying with respect to noncompactness measure $\alpha$, which implies that there exist a positively invariant bounded absorbing set~$\mathcal{B}_0$~and a positive constant~$t_{0}$~such that
\begin{equation*}
  \alpha(S(t)\mathcal{B}_0)\leq\varphi(t),\ \forall t\geq t_{0}.
\end{equation*}
Then for every bounded subset~$B$~of~$X$, we have
\begin{equation*}
\begin{split}
  \alpha(S(t)B)\leq\varphi(t-t_{*}(B)),\ \forall t\geq t_{*}(B)+t_{0},
  \end{split}
\end{equation*}
where~$t_{*}(B)$~is the entering time of~$B$~into~$\mathcal{B}_0$.
\end{lemma}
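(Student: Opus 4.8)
The plan is to transfer the decay estimate from the distinguished absorbing set $\mathcal{B}_0$ to an arbitrary bounded set $B$ by exploiting the absorbing property together with the semigroup law and the monotonicity of the Kuratowski measure (property (ii) of the basic properties lemma). The key observation is that after the entering time $t_*(B)$ the orbit of $B$ sits inside $\mathcal{B}_0$, so its later evolution is subordinate to the evolution of $\mathcal{B}_0$, for which the hypothesis already supplies the bound.

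Concretely, I would fix a bounded set $B$ and a time $t\geq t_*(B)+t_0$. Since $\mathcal{B}_0$ is absorbing with entering time $t_*(B)$, one has $S(t_*(B))B\subseteq \mathcal{B}_0$. Writing $t=(t-t_*(B))+t_*(B)$ and using the semigroup property $S(t)=S(t-t_*(B))S(t_*(B))$, I obtain the inclusion
\begin{equation*}
S(t)B=S(t-t_*(B))\bigl(S(t_*(B))B\bigr)\subseteq S(t-t_*(B))\mathcal{B}_0 .
\end{equation*}
Applying monotonicity of $\alpha$ gives $\alpha(S(t)B)\leq \alpha(S(t-t_*(B))\mathcal{B}_0)$, and since $t-t_*(B)\geq t_0$ the hypothesis $\alpha(S(s)\mathcal{B}_0)\leq\varphi(s)$ for $s\geq t_0$ yields $\alpha(S(t-t_*(B))\mathcal{B}_0)\leq\varphi(t-t_*(B))$. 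Chaining these inequalities delivers exactly $\alpha(S(t)B)\leq\varphi(t-t_*(B))$ for all $t\geq t_*(B)+t_0$.

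The only delicate point is the endpoint: the definition of absorbing set furnishes the inclusion $S(s)B\subseteq\mathcal{B}_0$ for $s$ strictly greater than $t_*(B)$, so at $t=t_*(B)+t_0$ the decomposition above requires the containment at $s=t_*(B)$ itself. I would resolve this either by using that $\mathcal{B}_0$ is closed together with the continuity of $S(t_*(B))$ to pass to the closure (recalling $\alpha(\overline{A})=\alpha(A)$ by property (iv)), or by running the argument with $S(t_*(B)+\varepsilon)B\subseteq\mathcal{B}_0$ for small $\varepsilon>0$ and letting $\varepsilon\to 0^{+}$; since in the polynomial and exponential cases of interest $\varphi$ is continuous, this limit recovers the sharp bound $\varphi(t-t_*(B))$. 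Beyond this boundary bookkeeping the argument is entirely routine, the substance being the single semigroup decomposition followed by monotonicity.
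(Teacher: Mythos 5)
Your proof is correct and follows essentially the same route as the paper's: the inclusion $S(t)B=S(t-t_{*}(B))S(t_{*}(B))B\subseteq S(t-t_{*}(B))\mathcal{B}_0$, then monotonicity of $\alpha$ and the hypothesis on $\mathcal{B}_0$. Your remark on the endpoint $t=t_{*}(B)+t_{0}$ is a fair point of care that the paper simply glosses over by asserting $S(t)B\subseteq\mathcal{B}_0$ for all $t\geq t_{*}(B)$, and either of your fixes resolves it.
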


\begin{proof}It follows from
\begin{equation*}
  S(t)B\subseteq \mathcal{B}_0, \ \forall t\geq t_{*}(B)
\end{equation*}
that~$S(t)B=S(t-t_{*}(B))S(t_{*}(B))B\subseteq S(t-t_{*}(B))\mathcal{B}_0$.
Consequently, we have
\begin{equation*}
\begin{split}
\alpha(S(t)B)\leq\alpha(S(t-t_{*}(B))\mathcal{B}_0)\leq\varphi(t-t_{*}(B)),\ \forall t\geq t_{*}(B)+t_0.
  \end{split}
\end{equation*}
\end{proof}

The importance of~$\varphi$-decay with respect to noncompactness measure is reflected in the following theorem.
\begin{theorem}\label{20-8-5-3}
Assume that the dynamical system~$(X, \{S(t)\}_{t\geq0})$~is~$\varphi$-decaying with respect to noncompactness measure $\alpha$, which implies that there exist a positively invariant bounded absorbing set~$\mathcal{B}_0$~and a positive constant~$t_{0}$~such that
\begin{equation*}
  \alpha(S(t)\mathcal{B}_0)\leq\varphi(t),\ \forall t\geq t_{0}.
\end{equation*}
Then there exists a positively invariant compact set~$\mathcal{A}^*$~such that for every bounded set~$B\subseteq X$~we have
\begin{equation}\label{20-8-5-33}
\begin{split}
\mathrm{ dist}\left(S(t)B, \mathcal{A}^*\right)\leq \varphi(t-t_{*}(B)-1),\ \forall t\geq t_{*}(B)+t_{0}+1,
  \end{split}
\end{equation}
where~$t_{*}(B)$~is the entering time of~$B$~into~$\mathcal{B}_0$. In other words,~$(X, \{S(t)\}_{t\geq0})$~possesses a~$\varphi$-attractor(see Definition~\ref{def21-1-31-1}).
\end{theorem}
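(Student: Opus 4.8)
My plan is to reduce the statement for an arbitrary bounded set $B$ to the single case $B=\mathcal{B}_0$, and then to construct $\mathcal{A}^*$ by hand. For the reduction, observe that $S(t)B=S(t-t_*(B))S(t_*(B))B\subseteq S(t-t_*(B))\mathcal{B}_0$ for all $t\ge t_*(B)$, so by monotonicity of the Hausdorff semidistance in its first argument $\mathrm{dist}(S(t)B,\mathcal{A}^*)\le\mathrm{dist}(S(t-t_*(B))\mathcal{B}_0,\mathcal{A}^*)$. Hence \eqref{20-8-5-33} follows at once from the single estimate $\mathrm{dist}(S(\tau)\mathcal{B}_0,\mathcal{A}^*)\le\varphi(\tau-1)$ for $\tau\ge t_0+1$, exactly as Lemma~\ref{lemma 8-20-5-12} was deduced for $\alpha$. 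So the whole problem is to build $\mathcal{A}^*$ and prove this one decay estimate.

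The naive candidate $\mathcal{A}^*=\bigcap_{s\ge0}\overline{S(s)\mathcal{B}_0}$ is compact and positively invariant, but in general it is \emph{too small}: a slowly decaying mode can keep $S(\tau)\mathcal{B}_0$ a fixed distance away from it even once $\alpha(S(\tau)\mathcal{B}_0)$ is already tiny, so the attraction rate toward it need not be comparable to $\varphi$. I would therefore \emph{enlarge} it. For each integer $n\ge n_0:=\lceil t_0\rceil$ use $\alpha(S(n)\mathcal{B}_0)\le\varphi(n)$ to pick a finite set $\mathcal{N}_n\subseteq S(n)\mathcal{B}_0$ with $\mathrm{dist}(S(n)\mathcal{B}_0,\mathcal{N}_n)\le\varphi(n)$, and set
\[
\mathbb{K}:=\overline{\bigcup_{n\ge n_0}\mathcal{N}_n},\qquad \mathcal{A}^*:=\overline{\bigcup_{t\ge0}S(t)\mathbb{K}}.
\]

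Then I would verify the three required properties. \emph{Compactness}: positive invariance of $\mathcal{B}_0$ makes the images nested, $S(s)\mathcal{B}_0\subseteq S(s')\mathcal{B}_0$ for $s\ge s'$, so $\bigcup_{n\ge m}\mathcal{N}_n\subseteq S(m)\mathcal{B}_0$; splitting off the finite front part and using the properties of $\alpha$ (precompactness is equivalent to $\alpha=0$, together with monotonicity and the union property) gives $\alpha(\mathbb{K})\le\varphi(m)$ for all $m$, hence $\alpha(\mathbb{K})=0$ and $\mathbb{K}$ is compact. Since $\mathbb{K}\subseteq\mathcal{B}_0$, the same nesting yields $\bigcup_{t\ge T}S(t)\mathbb{K}\subseteq S(T)\mathcal{B}_0$, while $\bigcup_{0\le t\le T}S(t)\mathbb{K}$ is compact as a continuous image of $[0,T]\times\mathbb{K}$; thus $\alpha(\mathcal{A}^*)=0$ and $\mathcal{A}^*$ is compact. \emph{Positive invariance} is immediate from continuity of $S(\tau)$: $S(\tau)\mathcal{A}^*\subseteq\overline{\bigcup_{t\ge0}S(t+\tau)\mathbb{K}}\subseteq\mathcal{A}^*$.

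Finally, the decay estimate, which is the crux. At integer times it is clean: since $\mathcal{N}_n\subseteq\mathbb{K}\subseteq\mathcal{A}^*$ we get $\mathrm{dist}(S(n)\mathcal{B}_0,\mathcal{A}^*)\le\mathrm{dist}(S(n)\mathcal{B}_0,\mathcal{N}_n)\le\varphi(n)$. For general $\tau\ge t_0+1$ put $n=\lfloor\tau\rfloor\ge n_0$; given $x=S(\tau)b$, choose $p\in\mathcal{N}_n$ with $d(S(n)b,p)\le\varphi(n)$, so that $S(\tau-n)p\in S(\tau-n)\mathbb{K}\subseteq\mathcal{A}^*$ is the candidate nearest point. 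The main obstacle is precisely this last step, namely bounding $d\big(S(\tau-n)S(n)b,\,S(\tau-n)p\big)$ by $\varphi(\tau-1)$. Because $n\ge\tau-1$ and $\varphi$ is decreasing, $\varphi(n)\le\varphi(\tau-1)$, so it suffices that the fractional-time map $S(\tau-n)$ with $\tau-n\in[0,1)$ not expand distances between points of (a neighborhood of) the compact set $\mathbb{K}$; this is where the one-unit loss ``$-1$'' is spent. It holds automatically when each $S(t)$ is non-expansive on $\mathcal{B}_0$, and more generally a uniform Lipschitz bound over unit time intervals gives the same conclusion after adjusting the constant in $\varphi$, which is the typical situation for systems treated by energy estimates. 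Granting this, $\mathrm{dist}(S(\tau)\mathcal{B}_0,\mathcal{A}^*)\le\varphi(\tau-1)$ for $\tau\ge t_0+1$, and the reduction of the first paragraph yields \eqref{20-8-5-33}.
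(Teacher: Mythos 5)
There is a genuine gap, and it sits exactly at the step you yourself flag as the crux. You factor $S(\tau)=S(\tau-n)S(n)$ with $n=\lfloor\tau\rfloor$, approximate $S(n)b$ by a net point $p$, and then need the fractional-time map $S(\tau-n)$ to be non-expansive (or uniformly Lipschitz over unit time intervals) near $\mathbb{K}$. Neither property is available: the theorem assumes only that each $S(t)$ is a continuous self-map of $X$, with no quantitative modulus of continuity, and nothing in the hypotheses supplies one. Nor does ``adjusting the constant in $\varphi$'' rescue the statement: $\varphi$ is an arbitrary decreasing function and the asserted conclusion is the exact bound $\mathrm{dist}(S(t)B,\mathcal{A}^*)\leq\varphi(t-t_{*}(B)-1)$, so a Lipschitz constant $L$ would degrade this to $L\,\varphi(\cdot)$, which is a genuinely weaker conclusion for a general $\varphi$-attractor in the sense of Definition~\ref{def21-1-31-1} (for polynomial $\varphi$ it happens to be cosmetic, but the theorem is not restricted to that case).

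The fix is a one-line reordering, and it is precisely how the paper argues: use the positive invariance of $\mathcal{B}_0$ to apply the fractional time \emph{first}. Since $S(\tau-n)\mathcal{B}_0\subseteq\mathcal{B}_0$, one has $S(\tau)\mathcal{B}_0=S(n)S(\tau-n)\mathcal{B}_0\subseteq S(n)\mathcal{B}_0$, so your $\varphi(n)$-net $\mathcal{N}_n\subseteq\mathcal{A}^*$ already covers $S(\tau)\mathcal{B}_0$ within $\varphi(n)\leq\varphi(\tau-1)$; no regularity of $S(\tau-n)$ beyond its being a self-map of $\mathcal{B}_0$ is used, and the one-unit loss is spent purely on the monotonicity $\varphi(\lfloor\tau\rfloor)\leq\varphi(\tau-1)$. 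With this substitution the remainder of your argument is sound and essentially coincides with the paper's proof: your $\mathbb{K}$ and $\mathcal{A}^*$ amount to (the closure of) the paper's $\bigcup_m F_m\cup\mathcal{A}$, your reduction to the case $B=\mathcal{B}_0$ is the same device as Lemma~\ref{lemma 8-20-5-12}, and your $\alpha$-based compactness proof works, with one further caveat: compactness of $\bigcup_{0\leq t\leq T}S(t)\mathbb{K}$ as ``a continuous image of $[0,T]\times\mathbb{K}$'' uses joint continuity of $(t,x)\mapsto S(t)x$, which is also not among the stated hypotheses. The paper avoids this by keeping the net points as a countable family and using only continuity of the single-orbit maps $t\mapsto S(t)S(m)a_i^{(m)}$ on compact time intervals inside a sequential-compactness argument; if you wish to retain the closure construction, you should either add joint continuity as a standing assumption or likewise work with countably many orbits.
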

\begin{proof}
\textbf{Step1:}
It follows from
\begin{equation*}
\left\{
\begin{array}{rl}
&\varphi(t)\rightarrow 0\  \text{as}\ t\rightarrow +\infty,\\
&\alpha(S(t)\mathcal{B}_0)\leq\varphi(t),\ \forall t\geq t_{0}
\end{array}
\right.
\end{equation*}
that
\begin{equation}\label{20-8-5-21}
\alpha(S(t)\mathcal{B}_0)\rightarrow 0 (t\rightarrow +\infty),
\end{equation}
from which we can defuce that~$\mathcal{A}=\omega (\mathcal{B}_0)\equiv \bigcap_{t\geq 0}\overline{S(t)\mathcal{B}_0}$ is the global attractor of this system.

Since
\begin{equation*}
  \alpha(S(t)\mathcal{B}_0)\leq\varphi(t),\ \forall t\geq t_{0},
\end{equation*}
for every positive integer~$m\geq t_{0}$,~$S(m)\mathcal{B}_0$~has a finite~$\varphi(m)$-net which we denote by $E_m=\bigcup_{i=1}^{K_m}S(m)a_i^{(m)},\  a_i^{(m)}\in \mathcal{B}_0$, i.e.,~$S(m)\mathcal{B}_0\subseteq \bigcup_{x_{\lambda}\in E_m}B(x_{\lambda},\varphi(m))$.

Let~$F_m=\bigcup\limits_{t\geq0}S(t)E_m=\bigcup\limits_{t\geq0}S(t)\bigcup\limits_{i=1}^{K_m}S(m)a_i^{(m)},\  a_i^{(m)}\in \mathcal{B}_0$. Take~$\mathcal{A}^*=\bigcup\limits_{m\in \mathds{N},~m\geq t_{0}}F_m\bigcup\mathcal{A}$. It is obvious that $\mathcal{A}^*$ is positively invariant.

Since~$\mathcal{A}^*\supseteq\bigcup_{m=[t_0]+1}^{+\infty}E_m$,
\begin{equation}\label{20-8-5-2}
  \mathrm{dist}\left(S(m)\mathcal{B}_0, \mathcal{A}^*\right)\leq \varphi(m), \ \forall m\geq t_{0}.
\end{equation}
By the positive invariance of~$\mathcal{B}_0$,~$S(t)\mathcal{B}_0\subseteq S([t])\mathcal{B}_0$~(where~$[t]$~is the integral part of~$t$~). Hence we deduce from the monotonicity of $\varphi(t)$ and $(\ref{20-8-5-2})$ that
\begin{equation}\label{20-8-5-4}
\begin{split}
 \mathrm{dist}\left(S(t)\mathcal{B}_0, \mathcal{A}^*\right)\leq &dist\left(S([t])\mathcal{B}_0, \mathcal{A}^*\right)\\
 \leq &\varphi([t])\\
 \leq &\varphi(t-1)
  \end{split}
\end{equation}
holds for all~$t\geq t_{0}+1$.
For every bounded set~$B\subseteq X$, there exists~$t_{*}(B)$ such that
\begin{equation*}
  S(t)B\subseteq \mathcal{B}_0, \ \forall t\geq t_{*}(B).
\end{equation*}
Therefore~$S(t)B=S(t-t_{*}(B))S(t_{*}(B))B\subseteq S(t-t_{*}(B))\mathcal{B}_0$, and thus for every $t\geq t_{*}(B)+t_{0}+1$ we have
\begin{equation}\label{20-8-5-5}
\begin{split}
 \mathrm{dist}\left(S(t)B, \mathcal{A}^*\right)\leq &\mathrm{dist}\left(S(t-t_{*}(B))\mathcal{B}_0, \mathcal{A}^*\right)\\
 \leq &\varphi(t-t_{*}(B)-1).
  \end{split}
\end{equation}

\textbf{Step2:}~Next we shall verify that~$\mathcal{A}^*$ is compact, i.e., every sequence $\{x_n\}_{n=1}^{+\infty}\subseteq\mathcal{A}^*$ has a subsequence which converges to a point in $\mathcal{A}^*$.
Since the global attractor~$\mathcal{A}$~is compact, if there exists a subsequence $\{x_{n_k}\}_{k=1}^{+\infty}\subseteq\mathcal{A}$, then $\mathcal{A}^*$ is compact. Therefore, without loss of generality we can assume that
~$\{x_n\}_{n=1}^{+\infty}\subseteq\bigcup_{m\in \mathds{N},~m\geq t_{0}}F_m$. Write
\begin{equation}\label{20-8-5-10}
x_n=S(t_n)S(m_n)a_{i_n}^{(m_n)}, \ t_n\geq 0, m_n\geq t_{0}, 1\leq i_n\leq k_{m_n}, a_{i_n}^{(m_n)}\in \mathcal{B}_0.
\end{equation}
\begin{itemize}
  \item[(i)] If~$\{t_n+m_n\}_{n=1}^{+\infty}$ is unbounded, then there exists a subsequence~$\{n_k\}$~of~$\{n\}$ such that $t_{n_k}+m_{n_k}\rightarrow+\infty$ as $k\rightarrow+\infty$. Hence we can deduce from $(\ref{20-8-5-21})$ that there exists a subsequence of $\{x_n\}_{n=1}^{+\infty}$ convergent in~$\mathcal{A}=\omega(\mathcal{B}_0)\subseteq \mathcal{A}^*$.
  \item[(ii)] If there exists a positive integer~$N_0$ such that~$t_n+m_n\leq N_0$ for all $n\in \mathds{N}$, then~$\{x_n\}\subseteq \bigcup\limits_{m\in \mathds{N},~t_{0}\leq m\leq N_0 }\bigcup\limits_{i=1}^{K_m}\bigcup\limits_{t\in [0,N_0]}S(t)S(m)a_i^{(m)}$. For given~$m$ and~$i$, mapping $t\rightarrow S(t)S(m)a_i^{(m)}$ is continuous, and~$[0,N_0]$ is a compact set, so~$\bigcup\limits_{t\in [0,N_0]}S(t)S(m)a_i^{(m)}$ is compact, and thus $\bigcup\limits_{m\in \mathds{N},~t_{0}\leq m\leq N_0 }\bigcup\limits_{i=1}^{K_m}\bigcup\limits_{t\in [0,N_0]}S(t)S(m)a_i^{(m)}$ is also compact. Consequently, there exists a subsequence of $\{x_n\}_{n=1}^{+\infty}$ convergent in~$\bigcup\limits_{m\in \mathds{N}, t_{0}\leq m\leq N_0 }\bigcup\limits_{i=1}^{K_m}\bigcup\limits_{t\in [0,N_0]}S(t)S(m)a_i^{(m)}\subseteq\mathcal{A}^*$.
\end{itemize}

This finishes the proof.
\end{proof}
\begin{remark}
The method of constructing $\varphi$-attractors~$\mathcal{A}^*$ is to add a countable collection of positively invariant  point sets to the global attractor such that the added point sets attract a positively  invariant bounded absorbing set~$\mathcal{B}_0$ at $\varphi$-speed and thus can  attract any bounded set~$B$ at  $\varphi$-speed. The $\varphi$-decay  with respect to noncompactness measure guarantees that the union of the global attractor and these added point sets is compact. Therefore,  the decay rate with respect to noncompact measure $\alpha$ essentially  characterizes the attractive rate that a positively invariant compact set may reach.
\end{remark}

%\begin{remark}
%Theorem~$\ref{20-8-5-3}$ shows that $\varphi$-decay with respect to noncompactness measure is a sufficient condition for the existence of~$\varphi$-attractors. In fact, we also have proved  in \cite{my3}  that $2\varphi$-decaying  with respect to noncompactness measure is a necessary  condition for a dissipative dynamical system to possess a  $\varphi$-attractor.
%\end{remark}

At the end of this section, we will give a lemma concerning the decay estimate of nonnegative functions. The proof of the main theorem in this paper is based on this lemma.
\begin{lemma}\label{20-7-26-40}
Suppose that~$\omega(t)$~is a nonnegative function on~$\mathds{R}^{+}$~satisfying
\begin{equation}\label{20-7-26-41}
  \max\{\omega^{1+\alpha}(t),\omega^{1+\alpha}(t+T)\}\leq h(t)[\omega(t)-\omega(t+T)],\ \forall t\geq t_0,
\end{equation}
where~$\alpha,T, t_0$~are positive constants,~$h(t)$~is a positive monotone function. Then~$\omega(t)$~satisfies the following estimate:
\begin{equation*}
\begin{split}
\omega(t)\leq \Big\{ \inf_{s\in[t_0,t_0+T]}\omega^{-\alpha}(s)+\frac{\alpha}{T}\int_{t_0+T}^{t-T}\frac{ds}{h(s)}\Big\}^{-\frac{1}{\alpha}},\ \forall t\geq t_0+2T.
 \end{split}
 \end{equation*}

In particular, when~$h(t)=K_0$, we have
\begin{equation}\label{20-7-30-5}
\begin{split}
\omega(t)\leq \Big\{ \inf_{s\in[t_0,t_0+T]}\omega^{-\alpha}(s)+\frac{\alpha}{TK_0}(t-t_0-2T)\Big\}^{-\frac{1}{\alpha}},\ \forall t\geq t_0+2T.
 \end{split}
 \end{equation}
\end{lemma}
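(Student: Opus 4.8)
The plan is to convert the difference inequality \eqref{20-7-26-41} into a one-step lower bound for $\omega^{-\alpha}$ and then iterate it. First I would record that, since the left-hand side of \eqref{20-7-26-41} is nonnegative, $\omega(t)\geq\omega(t+T)$ for all $t\geq t_0$; thus $\omega$ is nonincreasing along every arithmetic progression of step $T$, and if $\omega$ vanishes at some point $\geq t_0$ it vanishes at all later points of that progression, so in that degenerate case the claimed estimate holds trivially (its left side is $0$, its right side is $\geq 0$). Assuming $\omega(t)>0$, I would apply the tangent-line inequality for the convex function $x\mapsto x^{-\alpha}$ on $(0,+\infty)$, namely $\omega^{-\alpha}(t+T)\geq\omega^{-\alpha}(t)+\alpha\,\omega^{-\alpha-1}(t)[\omega(t)-\omega(t+T)]$, and then insert the bound $\omega(t)-\omega(t+T)\geq\omega^{1+\alpha}(t)/h(t)$ coming from the $\omega^{1+\alpha}(t)$ branch of the maximum in \eqref{20-7-26-41}. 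The exponents cancel exactly ($-\alpha-1+1+\alpha=0$), yielding the one-step inequality
\begin{equation*}
\omega^{-\alpha}(t+T)\geq\omega^{-\alpha}(t)+\frac{\alpha}{h(t)},\qquad\forall t\geq t_0.
\end{equation*}

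Next I would anchor the iteration. Given $t\geq t_0+2T$, set $n=\lfloor (t-t_0)/T\rfloor\geq 2$ and $s_*=t-nT\in[t_0,t_0+T)$, so that $t=s_*+nT$. Since $\omega(t)>0$, each of $\omega(s_*),\dots,\omega(s_*+(n-1)T)$ is positive (otherwise $\omega(t)$ would vanish), so the one-step inequality applies at the points $s_*,s_*+T,\dots,s_*+(n-1)T$, all of which are $\geq t_0$. Telescoping gives
\begin{equation*}
\omega^{-\alpha}(t)\geq\omega^{-\alpha}(s_*)+\alpha\sum_{j=0}^{n-1}\frac{1}{h(s_*+jT)}\geq\inf_{s\in[t_0,t_0+T]}\omega^{-\alpha}(s)+\alpha\sum_{j=0}^{n-1}\frac{1}{h(s_*+jT)}.
\end{equation*}

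The remaining, and main, obstacle is to bound the Riemann-type sum below by $\frac1T\int_{t_0+T}^{t-T}\frac{ds}{h(s)}$, and this is precisely where the monotonicity of $h$ is used. If $h$ is increasing then $1/h$ is decreasing, so $\frac{1}{h(s_*+jT)}\geq\frac1T\int_{s_*+jT}^{s_*+(j+1)T}\frac{ds}{h(s)}$ and summing over $0\leq j\leq n-1$ gives $\sum\geq\frac1T\int_{s_*}^{t}\frac{ds}{h(s)}$; if $h$ is decreasing then $1/h$ is increasing, so $\frac{1}{h(s_*+jT)}\geq\frac1T\int_{s_*+(j-1)T}^{s_*+jT}\frac{ds}{h(s)}$ and, discarding the nonnegative $j=0$ term and summing over $1\leq j\leq n-1$, $\sum\geq\frac1T\int_{s_*}^{t-T}\frac{ds}{h(s)}$. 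Because $s_*<t_0+T$ and $1/h\geq 0$, in either case $[t_0+T,t-T]$ lies inside the range of integration, whence $\sum_{j=0}^{n-1}h(s_*+jT)^{-1}\geq\frac1T\int_{t_0+T}^{t-T}\frac{ds}{h(s)}$. Substituting this back and raising both sides to the power $-1/\alpha$ (a decreasing operation on $(0,+\infty)$) delivers the asserted bound, and the particular case $h\equiv K_0$ follows at once from $\int_{t_0+T}^{t-T}\frac{ds}{K_0}=(t-t_0-2T)/K_0$.
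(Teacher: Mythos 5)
Your proposal is correct and follows essentially the same route as the paper's proof: both convert $(\ref{20-7-26-41})$ into the one-step bound $\omega^{-\alpha}(t+T)\geq\omega^{-\alpha}(t)+\frac{\alpha}{h(t)}$ (you via the tangent-line inequality for the convex map $x\mapsto x^{-\alpha}$, the paper via the equivalent integral representation of $\omega^{-\alpha}(t+T)-\omega^{-\alpha}(t)$), then telescope along the progression $t-nT,\,t-(n-1)T,\dots,t$ with $t-nT\in[t_0,t_0+T)$ and compare the resulting sum with $\frac{1}{T}\int_{t_0+T}^{t-T}\frac{ds}{h(s)}$ by the same case split on the monotonicity of $h$. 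Your explicit handling of the degenerate case where $\omega$ vanishes (where $\omega^{-\alpha}$ is undefined) is a small point of rigor the paper leaves implicit, but it does not change the argument.
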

\begin{proof}
\begin{equation}\label{20-7-26-43}
\begin{split}
&\omega^{-\alpha}(t+T)-\omega^{-\alpha}(t)\\=&\int_0^1\frac{d}{d\theta}\Big\{\big[\theta \omega(t+T)+(1-\theta)\omega(t)\big]^{-\alpha}\Big\}d\theta\\
=&\alpha\int_0^1\big[\theta \omega(t+T)+(1-\theta)\omega(t)\big]^{-\alpha-1}d\theta\big[\omega(t)-\omega(t+T)\big]\\
\geq &\alpha\big(\max\{\omega(t),\omega(t+T)\}\big)^{-\alpha-1}\big[\omega(t)-\omega(t+T)\big].
\end{split}
\end{equation}
It follows from~$(\ref{20-7-26-41})$~and~$(\ref{20-7-26-43})$~that
\begin{equation*}
\begin{split}
\omega^{-\alpha}(t+T)-\omega^{-\alpha}(t)\geq \frac{\alpha}{h(t)},\ \forall t\geq t_0.
 \end{split}\end{equation*}
 Thus
 \begin{equation}\label{20-7-30-1}
\begin{split}
\omega^{-\alpha}(t)\geq \omega^{-\alpha}(t-nT)+\frac{\alpha}{T}\sum_{i=1}^{n}\frac{T}{h(t-iT)},\ \forall t\geq t_0+T,
 \end{split}
 \end{equation}
where~$n\equiv[\frac{t-t_0}{T}]$~is the integral part of~$\frac{t-t_0}{T}$.

 If~$h(t)$~is non-increasing, then by~$(\ref{20-7-30-1})$,
 \begin{equation}\label{20-7-30-2}
\begin{split}
\omega^{-\alpha}(t)\geq &\omega^{-\alpha}(t-nT)+\frac{\alpha}{h(t-nT)}+\frac{\alpha}{T}\sum_{i=1}^{n-1}\int_{t-(i+1)T}^{t-iT}\frac{ds}{h(t-iT)}\\
\geq &\omega^{-\alpha}(t-nT)+\frac{\alpha}{h(t-nT)}+\frac{\alpha}{T}\int_{t-nT}^{t-T}\frac{ds}{h(s)}\\
\geq &\omega^{-\alpha}(t-nT)+\frac{\alpha}{T}\int_{t_0+T}^{t-T}\frac{ds}{h(s)}\\
\geq &\inf_{s\in[t_0,t_0+T]}\omega^{-\alpha}(s)+\frac{\alpha}{T}\int_{t_0+T}^{t-T}\frac{ds}{h(s)}
 \end{split}
 \end{equation}
holds for all~$t\geq t_0+2T$.

 If~$h(t)$~is non-decreasing, then by~$(\ref{20-7-30-1})$
 \begin{equation}\label{20-7-30-3}
\begin{split}
\omega^{-\alpha}(t)\geq &\omega^{-\alpha}(t-nT)+\frac{\alpha}{T}\sum_{i=1}^{n}\int_{t-iT}^{t-(i-1)T}\frac{ds}{h(t-iT)}\\
\geq &\omega^{-\alpha}(t-nT)+\frac{\alpha}{T}\int_{t-nT}^{t}\frac{ds}{h(s)}\\
\geq &\omega^{-\alpha}(t-nT)+\frac{\alpha}{T}\int_{t_0+T}^{t-T}\frac{ds}{h(s)}\\
\geq &\inf_{s\in[t_0,t_0+T]}\omega^{-\alpha}(s)+\frac{\alpha}{T}\int_{t_0+T}^{t-T}\frac{ds}{h(s)}
 \end{split}
 \end{equation}
holds for all~$t\geq t_0+2T$.

Combining~$(\ref{20-7-30-2})$~and~$(\ref{20-7-30-3})$, we conclude that if~$h(t)$~is monotone, then
\begin{equation}\label{20-7-30-4}
\begin{split}
\omega(t)\leq \Big\{ \inf_{s\in[t_0,t_0+T]}\omega^{-\alpha}(s)+\frac{\alpha}{T}\int_{t_0+T}^{t-T}\frac{ds}{h(s)}\Big\}^{-\frac{1}{\alpha}},\ \forall t\geq t_0+2T.
 \end{split}
 \end{equation}
The estimate~$(\ref{20-7-30-5})$~%-~$(\ref{20-7-30-7})$~
follows immediately from~$(\ref{20-7-30-4})$.
 \end{proof}

\begin{remark}
Lemma~$\ref{20-7-26-40}$~estimates the decay rate of nonnegative function~$\omega(t)$~satisfying the difference inequality~$(\ref{20-7-26-41})$~in the case that~$h(t)$~is a general positive monotone function. It is a generalization of Theorem~$1$~in \cite{MR513082} by~M. Nakao. The latter established decay estimate from the
difference inequality
\begin{equation*}
  \sup_{s\in[t,t+1]}\omega(s)^{1+\alpha}\leq K_0(1+t)^{\gamma}\big(\omega(t)-\omega(t+1)\big)+g(t).
\end{equation*}
\end{remark}

\section{Abstract results on existence of polynomial attractors and estimate of their attractive velocity}

\begin{lemma}\label{lem21-4-18-1}
Let~$\{S(t)\}_{t\geq0}$ be a dissipative dynamical system on a complete metric space~$(X,d)$ and~$\mathcal{B}_0$ be a positively invariant bounded absorbing set. Assume that there exist positive constants~$T, \delta_0$, a continuous non-decreasing function $q:\mathds{R}^{+}\rightarrow \mathds{R}^{+}$, a function~$g:(\mathds{R}^{+})^{m}\rightarrow\mathds{R}^{+}$~and pseudometrics ~$\varrho_{T}^{i}\ (i=1,2,\ldots,m)$~on~$\mathcal{B}_0$~such that
\begin{itemize}
\item[(i)]~$q(0)=0$;~$q(s)<s,\ s>0$;
\item[(ii)]~$g$~is non-decreasing with respect to each
variable,~$g(0,\ldots,0)=0$~and~$g$~is continuous at~$(0,\ldots,0)$;
\item[(iii)]~$\varrho_{T}^{i}(i=1,2,\ldots,m)$~is precompact on $\mathcal{B}_0$, i.e., any sequence~$\{x_n\}\subseteq \mathcal{B}_0$~has a subsequence~$\{x_{n_k}\}$~which is Cauchy with
respect to~$\varrho_{T}^{i}$;
\item[(iv)]~the inequality
\begin{equation}\label{21-4-18-2}
\begin{split}
&\big(d(S(T)y_1,S(T)y_2)\big)^2\\\leq
&q\Big(\big(d(y_1,y_2)\big)^2+g\big(\varrho_{T}^1(y_1,y_2),\varrho_{T}^2(y_1,y_2),\ldots,\varrho_{T}^m(y_1,y_2)\big)\Big)
\end{split}
\end{equation}
 holds for all~$y_1,y_2\in \mathcal{B}_0$~satisfying
~$\varrho_{T}^{i}(y_1,y_2)\leq \delta_0(i=1,2,\ldots,m)$.
\end{itemize}
Then
\begin{equation*}
\alpha(S(t)\mathcal{B}_0)\rightarrow 0 \ \text{as}\ t\rightarrow +\infty,
\end{equation*}
and~$\{S(t)\}_{t\geq0}$~is~$\omega$-limit compact.
\end{lemma}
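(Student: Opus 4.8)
The plan is to reduce the statement to a scalar recursion for the sequence $a_n := \alpha(S(nT)\mathcal{B}_0)$ and then to exploit the subdiagonal property $q(s)<s$ to force $a_n\to 0$. The heart of the matter is a one-step contraction estimate, namely
\[
a_{n+1}^2 \le q(a_n^2).
\]
To obtain it, I fix $\epsilon>0$ and $\eta\in(0,\delta_0]$ and build a suitable finite cover of $B_n := S(nT)\mathcal{B}_0$. By the definition of the Kuratowski measure, $B_n$ admits a finite cover by sets of $d$-diameter at most $a_n+\epsilon$. Simultaneously, condition (iii) says each pseudometric $\varrho_T^i$ is totally bounded on $\mathcal{B}_0$, so for each $i$ there is a finite cover of $\mathcal{B}_0$ by sets of $\varrho_T^i$-diameter less than $\eta$. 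Taking the common refinement of these finitely many finite covers yields a finite family $\{V_k\}$ covering $B_n$ on whose pieces the $d$-diameter is $\le a_n+\epsilon$ and \emph{every} $\varrho_T^i$-diameter is $<\eta\le\delta_0$.

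On each $V_k$ the hypothesis (iv) is therefore licensed. Using the monotonicity of $q$ and of $g$ in each variable, for $y_1,y_2\in V_k$ I get
\[
\big(d(S(T)y_1,S(T)y_2)\big)^2 \le q\left((a_n+\epsilon)^2 + g(\eta,\ldots,\eta)\right),
\]
so each $S(T)V_k$ has $d$-diameter bounded by the square root of the right-hand side. Since $\{S(T)V_k\}$ covers $B_{n+1}=S(T)B_n$, this gives $a_{n+1}^2 \le q\left((a_n+\epsilon)^2 + g(\eta,\ldots,\eta)\right)$. Letting $\eta\to 0^+$ and $\epsilon\to 0^+$, the continuity of $g$ at the origin with $g(0,\ldots,0)=0$ together with the continuity of $q$ sends the right-hand side to $q(a_n^2)$, establishing the recursion.

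With $b_n:=a_n^2$ the recursion reads $b_{n+1}\le q(b_n)$. Since $q$ is non-decreasing with $q(s)<s$ for $s>0$ (so $q(b_n)\le b_n$), the sequence $\{b_n\}$ is non-increasing and bounded below by $0$, hence converges to some $L\ge 0$; passing to the limit and using continuity of $q$ gives $L\le q(L)$, which forces $L=0$ by (i). Thus $\alpha(S(nT)\mathcal{B}_0)\to 0$. For arbitrary $t\ge 0$, writing $t=nT+r$ with $0\le r<T$ and using positive invariance of $\mathcal{B}_0$ gives $S(t)\mathcal{B}_0 = S(nT)S(r)\mathcal{B}_0\subseteq S(nT)\mathcal{B}_0$, whence $\alpha(S(t)\mathcal{B}_0)\le a_n\to 0$ as $t\to+\infty$. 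Finally, $\omega$-limit compactness follows exactly as in Lemma~\ref{lemma 8-20-5-12}: for any positively invariant bounded $B$ the absorbing property gives $S(t)B\subseteq S(t-t_*(B))\mathcal{B}_0$ for $t\ge t_*(B)$, so $\alpha(S(t)B)\le \alpha(S(t-t_*(B))\mathcal{B}_0)\to 0$. The step I expect to require the most care is the common-refinement construction, where I must produce a single finite cover controlling both the metric diameter and all $m$ pseudometric diameters simultaneously below $\delta_0$, so that (iv) genuinely applies on each piece; once that cover is in hand, the monotonicity of $q$ and $g$ converts the pointwise bound into a diameter bound and the limit in $\epsilon,\eta$ is routine.
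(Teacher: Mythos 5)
Your proof is correct and follows essentially the same route as the paper's: a common refinement of a Kuratowski cover of $S(nT)\mathcal{B}_0$ with finite $\varrho_{T}^{i}$-nets (total boundedness coming from (iii)), then hypothesis (iv) together with the monotonicity of $q$ and $g$ to obtain $\big(\alpha(S(T)B)\big)^2\le q\big((\alpha(B))^2\big)$, and finally the monotone-sequence argument $L\le q(L)\Rightarrow L=0$ plus positive invariance and absorption to pass from integer multiples of $T$ to all $t$ and all bounded sets. The only cosmetic difference is that you keep the term $g(\eta,\ldots,\eta)$ and send $\eta\to0^{+}$ at the end, whereas the paper first chooses $\delta$ so that $g<\epsilon$ on $[0,\delta]^m$; these yield the same one-step estimate.
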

\begin{proof}
For each ~$B\subseteq \mathcal{B}_0$ and any $\epsilon>0$, by Definition~$\ref{def20-10-26-1}$, there exist sets~$F_1,F_2,\ldots,F_n$~such that
\begin{equation}\label{21-4-19-1}
  B\subseteq\cup_{j=1}^{n}F_n,\ \mathrm{diam} F_j<\alpha(B)+\epsilon.
\end{equation}
It follows from assumption (ii) that there exists
~$\delta>0$~such that~$g(x_1,x_2,\ldots,x_m)<\epsilon$~whenever~$x_i\in [0,\delta]\ (i=1,2,\ldots,m)$. By the precompactness of
~$\varrho_{T}^i(i=1,2,\ldots,m)$, there exists a finite set ~$\mathcal{N}^i=\{x^i_{j}:j=1,2,\ldots,k_i\}\subseteq B$~such that for every~$y\in B$~there is~$x^i_{j}\in \mathcal{N}^i$~with the property ~$\varrho_{T}^i(y,x^i_{j})\leq\frac{1}{2}\min\{\delta,\delta_0\}$, i.e.,
\begin{equation}\label{21-4-19-9}
B\subseteq\cup_{j=1}^{k_i}C_{j}^{i},\ C_{j}^{i}=\big\{y\in B:\ \varrho_{T}^i(y,x^i_{j})\leq\frac{1}{2}\min\{\delta,\delta_0\}\big\},\ i=1,\ldots,m.
\end{equation}
Consequently, we have
\begin{equation*}
  B\subseteq\cup_{j_1,j_2,\ldots,j_m,j}( C_{j_1}^{1}\cap C_{j_2}^{2}\cap\ldots\cap C_{j_m}^{m}\cap F_j)
\end{equation*}
and
\begin{equation*}
  S(T)B\subseteq\cup_{j_1,j_2,\ldots,j_m,j}\big(S(T)( C_{j_1}^{1}\cap C_{j_2}^{2}\cap\ldots\cap C_{j_m}^{m}\cap F_j)\big).
\end{equation*}
By~$(\ref{21-4-19-1})$~and~$(\ref{21-4-19-9})$, for any~$y_1,y_2\in C_{j_1}^{1}\cap C_{j_2}^{2}\cap\ldots\cap C_{j_m}^{m}\cap F_j$, we have
\begin{equation}\label{21-4-19-11}
  d\big(y_1,y_2\big)\leq \mathrm{diam} F_j<\alpha(B)+\epsilon
\end{equation}
and
\begin{equation}\label{21-4-19-12}
  \varrho_{T}^i\big(y_1,y_2\big)\leq \min\{\delta,\delta_0\}, i=1,2,\ldots,m.
\end{equation}

Inequality~$(\ref{21-4-19-12})$~implies
\begin{equation}\label{21-4-19-13}
  g\Big(\varrho_{T}^1\big(y_1,y_2\big),\ldots,\varrho_{T}^m\big(y_1,y_2\big)\Big)<\epsilon.
\end{equation}
We deduce from~$(\ref{21-4-18-2})$, $(\ref{21-4-19-11})$, $(\ref{21-4-19-12})$~and~$(\ref{21-4-19-13})$~that
\begin{equation}\label{21-4-19-14}
\big(d(S(T)y_1,S(T)y_2)\big)^2\leq
q\Big(\big(\alpha(B)+\epsilon\big)^2+\epsilon\Big)
\end{equation}
for any~$y_1,y_2\in C_{j_1}^{1}\cap C_{j_2}^{2}\cap\ldots\cap C_{j_m}^{m}\cap F_j$. Therefore according to the definition of noncompactness measure~$\alpha$, we obtain
\begin{equation}\label{21-4-19-15}
\big(\alpha(S(T)B)\big)^2\leq
q\Big(\big(\alpha(B)+\epsilon\big)^2+\epsilon\Big).
\end{equation}
Since~$q$ is continuous and non-decreasing, combining $(\ref{21-4-19-15})$ and the arbitrariness of~$\epsilon$ gives
\begin{equation}\label{21-4-19-16}
\big(\alpha(S(T)B)\big)^2\leq
q\Big(\big(\alpha(B)\big)^2\Big).
\end{equation}
We infer from $(\ref{21-4-19-16})$ that
\begin{equation}\label{21-4-19-17}
\big(\alpha(S(kT)\mathcal{B}_0)\big)^2\leq
q\Big(\big(\alpha(S((k-1)T)\mathcal{B}_0)\big)^2\Big),\ k=1,2,\ldots.
\end{equation}
Since~$q(s)\leq s$, the sequence $\Big\{\Big(\alpha\big(S(kT)\mathcal{B}_0\big)\Big)^2\Big\}_{k=1}^{+\infty}$ is non-increasing and thus there exists~$\alpha_0=\lim_{k\rightarrow+\infty}\Big(\alpha\big(S(kT)\mathcal{B}_0\big)\Big)^2$.
By the continuity of~$q$, $(\ref{21-4-19-17})$ implies~$\alpha_0\leq q(\alpha_0)$, which yields~$\alpha_0=0$~by assumption (ii). Consequently, we obtain
\begin{equation}\label{21-6-25-1}
\alpha(S(t)\mathcal{B}_0)\rightarrow 0 \ \text{as}\ t\rightarrow +\infty.
\end{equation}
For any bounded set $D\subseteq X$, there exists $t_D>0$ such that $S(t_D)D\subseteq\mathcal{B}_0$, which implies $S(t+t_D)D\subseteq S(t)\mathcal{B}_0$. Thus, we have $\alpha\big(S(t+t_D)D \big)\leq \alpha\big(S(t)\mathcal{B}_0 \big)$, which, together with $(\ref{21-6-25-1})$, gives
\begin{equation}\label{21-4-19-20}
\alpha(S(t)D)\rightarrow 0 \ \text{as}\ t\rightarrow +\infty.
\end{equation}
Therefore, $\{S(t)\}_{t\geq0}$~is~$\omega$-limit compact.
\end{proof}

\begin{theorem}\label{20-7-27-80}
Let~$\{S(t)\}_{t\geq0}$ be a dissipative dynamical system on a complete metric space~$(X,d)$ and~$\mathcal{B}_0$ be a positively invariant bounded absorbing set. Assume that there exist positive constants~$C, T, \delta_0$,~$\beta\in(0,1)$, functions~$g_l:(\mathds{R}^{+})^{m}\rightarrow\mathds{R}^{+}\ (l=1,2)$ and pseudometrics~$\varrho_{T}^{i}\ (i=1,2,\ldots,m)$ on~$\mathcal{B}_0$ such that
\begin{itemize}
\item[(i)]~$g_l$ is non-decreasing with respect to each
variable,~$g_l(0,\ldots,0)=0$ and~$g_l$ is continuous at~$(0,\ldots,0)$;
\item[(ii)]~$\varrho_{T}^{i}(i=1,2,\ldots,m)$ is precompact on $\mathcal{B}_0$, i.e., any sequence~$\{x_n\}\subseteq \mathcal{B}_0$ has a subsequence~$\{x_{n_k}\}$ which is Cauchy with
respect to~$\varrho_{T}^{i}$;
\item[(iii)]~the inequalities
\begin{equation}\label{20-8-3-20}
\begin{split}
&(d(S(T)y_1,S(T)y_2))^2\\\leq
&(d(y_1,y_2))^2+g_1\Big(\varrho_{T}^1\big(y_1,y_2\big),\varrho_{T}^2\big(y_1,y_2\big),\ldots,\varrho_{T}^m\big(y_1,y_2\big)\Big)
\end{split}
\end{equation}
and
\begin{equation}\label{20-7-27-25}
\begin{split}
&\left(d(S(T)y_1,S(T)y_2)\right)^2
\\ \leq &C\bigg[ (d(y_1,y_2))^2-(d(S(T)y_1,S(T)y_2))^2\\&+g_1\Big(\varrho_{T}^1\big(y_1,y_2\big),\varrho_{T}^2\big(y_1,y_2\big),\ldots,\varrho_{T}^m\big(y_1,y_2\big)\Big)\bigg]^{\beta}
\\&+g_2\Big(\varrho_{T}^1\big(y_1,y_2\big),\varrho_{T}^2\big(y_1,y_2\big),\ldots,\varrho_{T}^m\big(y_1,y_2\big)\Big)
\end{split}
\end{equation}
 hold for all~$y_1,y_2\in \mathcal{B}_0$ satisfying
~$\varrho_{T}^{i}(y_1,y_2)\leq \delta_0(i=1,2,\ldots,m)$.
\end{itemize}
Then there exists~$t_0>0$~such that for each bounded~$B\subseteq X$~the estimate
\begin{equation}\label{20-9-29-2}
\begin{split}
\alpha(S(t)B)\leq \Big\{(\alpha(\mathcal{B}_0))^{\frac{2(\beta-1)}{\beta}}+\frac{1-\beta}{T\beta(1+2C)^{\frac{1}{\beta}}}\big(t-t_{*}(B)-t_0-2T\big)\Big\}^{\frac{\beta}{2(\beta-1)}}
\end{split}
\end{equation}
holds for all~$t\geq t_0+2T+t_{*}(B)$,
where~$t_{*}(B)$ satisfies
\begin{equation*}
  S(t)B\subseteq \mathcal{B}_0, \ \forall t\geq t_{*}(B).
\end{equation*}
Thus,~$(X, \{S(t)\}_{t\geq0})$ possesses a polynomial attractor~$\mathcal{A}^*$ (see Definition~\ref{def21-1-31-1})
such that for every bounded set~$B\subseteq X$,
\begin{equation}
\begin{split}
\mathrm{ dist}\left(S(t)B, \mathcal{A}^*\right)\leq \Big\{(\alpha(\mathcal{B}_0))^{\frac{2(\beta-1)}{\beta}}+\frac{1-\beta}{T\beta(1+2C)^{\frac{1}{\beta}}}\big(t-t_0-2T-t_{*}(B)-1\big)\Big\}^{\frac{\beta}{2(\beta-1)}}
\end{split}
\end{equation}
holds for all~$t\geq t_0+2T+t_{*}(B)+1$.
\end{theorem}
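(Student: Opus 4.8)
The plan is to reduce the theorem to the scalar difference inequality of Lemma~\ref{20-7-26-40}, applied to
\begin{equation*}
\omega(t):=\big(\alpha(S(t)\mathcal{B}_0)\big)^2,
\end{equation*}
and then to feed the resulting polynomial decay of $\alpha(S(t)\mathcal{B}_0)$ into Lemma~\ref{lemma 8-20-5-12} and Theorem~\ref{20-8-5-3}. Since $\mathcal{B}_0$ is positively invariant we have $S(t)\mathcal{B}_0\subseteq\mathcal{B}_0$, so both pointwise hypotheses~(\ref{20-8-3-20}) and~(\ref{20-7-27-25}) are available for every pair of points of $S(t)\mathcal{B}_0$. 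First I would fix $t\geq0$, put $B=S(t)\mathcal{B}_0$, and note that $\alpha(S(T)B)=\alpha(S(t+T)\mathcal{B}_0)$, so every inequality relating $\alpha(B)$ and $\alpha(S(T)B)$ becomes one relating $\omega(t)$ and $\omega(t+T)$.

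The core step is to promote the two pointwise inequalities to inequalities for the noncompactness measure, exactly along the lines of the proof of Lemma~\ref{lem21-4-18-1}. Given $\epsilon>0$, I cover $B$ by finitely many sets of diameter $<\alpha(B)+\epsilon$ and, using the continuity of $g_1,g_2$ at the origin together with the precompactness of the $\varrho_T^i$, refine this to a finite cover by pieces $G=C^1_{j_1}\cap\cdots\cap C^m_{j_m}\cap F_j$ on each of which $d(y_1,y_2)<\alpha(B)+\epsilon$, $\varrho_T^i(y_1,y_2)\leq\min\{\delta,\delta_0\}$, and hence $g_1,g_2<\epsilon$. Writing $P=(d(S(T)y_1,S(T)y_2))^2$ and $Q=(d(y_1,y_2))^2$, inequality~(\ref{20-8-3-20}) gives $P\leq Q+g_1<(\alpha(B)+\epsilon)^2+\epsilon$ on each piece; letting $\epsilon\to0$ yields $\omega(t+T)\leq\omega(t)$, so $\omega$ is non-increasing and the bracket in~(\ref{20-7-27-25}) is nonnegative, legitimizing its $\beta$-th power. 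For the decisive estimate I apply~(\ref{20-7-27-25}) on the piece $G_{k_0}$ whose image has the largest diameter, so that $(\alpha(S(T)B))^2\leq\mathrm{diam}(S(T)G_{k_0})^2=:P^\ast$. Bounding $Q$ by $\mathrm{diam}(G_{k_0})^2=:D$, every pair on $G_{k_0}$ satisfies $P\leq C[\,D-P+\epsilon\,]^\beta+\epsilon$; since the right-hand side is continuous and decreasing in $P$, taking the supremum over pairs gives $P^\ast\leq C[\,D-P^\ast+\epsilon\,]^\beta+\epsilon$. Using $D<(\alpha(B)+\epsilon)^2$ and $(\alpha(S(T)B))^2\leq P^\ast$ to replace $-P^\ast$ by $-(\alpha(S(T)B))^2$ inside the bracket, and letting $\epsilon\to0$, I obtain
\begin{equation*}
\omega(t+T)\leq C\big[\omega(t)-\omega(t+T)\big]^\beta,\qquad\forall\,t\geq0.
\end{equation*}

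From here the argument is purely scalar. Non-increasingness forces $\omega(t)\to0$ (otherwise $\omega(t)-\omega(t+T)\to0$ would force $\omega(t+T)\to0$, a contradiction), so there is $t_0>0$ with $\omega(t)\leq1$, hence $\omega(t)-\omega(t+T)\leq1$, for $t\geq t_0$. On $[0,1]$ one has $x\leq x^\beta$ for $\beta\in(0,1)$, so for $t\geq t_0$
\begin{equation*}
\omega(t)=\big[\omega(t)-\omega(t+T)\big]+\omega(t+T)\leq(1+2C)\big[\omega(t)-\omega(t+T)\big]^\beta,
\end{equation*}
and, raising to the power $1/\beta$ and using monotonicity,
\begin{equation*}
\max\big\{\omega^{1+\frac{1-\beta}{\beta}}(t),\omega^{1+\frac{1-\beta}{\beta}}(t+T)\big\}\leq(1+2C)^{\frac1\beta}\big[\omega(t)-\omega(t+T)\big].
\end{equation*}
This is precisely hypothesis~(\ref{20-7-26-41}) of Lemma~\ref{20-7-26-40} with exponent $\frac{1-\beta}{\beta}$ and $h\equiv K_0=(1+2C)^{1/\beta}$. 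Invoking~(\ref{20-7-30-5}), bounding $\inf_{s\in[t_0,t_0+T]}\omega^{-(1-\beta)/\beta}(s)$ from below by $(\alpha(\mathcal{B}_0))^{2(\beta-1)/\beta}$ (since $\omega(s)\leq(\alpha(\mathcal{B}_0))^2$ by monotonicity of $\alpha$), and substituting back $\omega=(\alpha(S(\cdot)\mathcal{B}_0))^2$ gives~(\ref{20-9-29-2}) for $B=\mathcal{B}_0$; Lemma~\ref{lemma 8-20-5-12} then extends~(\ref{20-9-29-2}) to a general bounded $B$. As the right-hand side of~(\ref{20-9-29-2}) is a decreasing function tending to $0$ with polynomial rate, the system is polynomially decaying with respect to $\alpha$ in the sense of Definition~\ref{def20-8-5-3}, and Theorem~\ref{20-8-5-3} furnishes the polynomial attractor $\mathcal{A}^\ast$ together with the distance estimate claimed in the statement.

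I expect the main obstacle to be the passage from the pointwise inequality~(\ref{20-7-27-25}) to the measure inequality $\omega(t+T)\leq C[\omega(t)-\omega(t+T)]^\beta$: unlike in Lemma~\ref{lem21-4-18-1}, the difference $(d(y_1,y_2))^2-(d(S(T)y_1,S(T)y_2))^2$ couples the pre- and post-images of the \emph{same} pair, so the subtracted term cannot simply be discarded. The resolution is the worst-piece selection together with the continuity-and-monotonicity-in-$P$ supremum trick described above, which permits replacing $-P^\ast$ by $-(\alpha(S(T)B))^2$. The remaining care is bookkeeping: checking that the $\epsilon$-limits are legitimate (the bracket stays nonnegative in the limit thanks to~(\ref{20-8-3-20})), and that the scalar algebra delivers exactly the constant $K_0=(1+2C)^{1/\beta}$ and exponent $\beta/(2(\beta-1))$ appearing in~(\ref{20-9-29-2}).
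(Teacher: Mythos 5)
Your proof is correct, and it diverges from the paper's precisely at the step you flagged as the main obstacle, namely the passage from the coupled pointwise inequality~$(\ref{20-7-27-25})$ to a difference inequality for~$\alpha$. The paper handles the coupling algebraically: raising~$(\ref{20-7-27-25})$ to the power~$1/\beta$ and absorbing the subtracted term into the strictly increasing function~$w(s)=(2C)^{-1/\beta}s^{1/\beta}+s$, it obtains the pointwise bound~$w(P)\leq Q+g_1+C^{-1/\beta}g_2^{1/\beta}$ whose right-hand side no longer involves~$P$; the covering argument then yields~$(\ref{20-8-3-44})$, equivalent to~$(\ref{20-8-3-45})$ with constant~$2C$, while the decay~$\alpha(S(t)\mathcal{B}_0)\to 0$ needed to locate~$t_0$ is obtained separately by feeding~$P\leq w^{-1}(Q+\cdots)$ into Lemma~$\ref{lem21-4-18-1}$ with~$q=w^{-1}$. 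Your worst-piece selection with the sup-and-continuity argument accomplishes both tasks at once: it produces the measure inequality with the sharper constant~$C$ in place of~$2C$ (both are then dominated by~$1+2C$, so the final estimate is identical), and it lets you deduce~$\omega(t)\to0$ directly from the scalar inequality, rendering Lemma~$\ref{lem21-4-18-1}$ and the~$w$,~$w^{-1}$ machinery unnecessary; your limit step is legitimate because~$P\leq Q+g_1\leq D+\epsilon$ keeps the bracket nonnegative up to~$P^{\ast}$, so continuity applies along a maximizing sequence of pairs. From~$(\ref{20-7-27-64})$ onward the two proofs coincide: the~$x\leq x^{\beta}$ trick on~$[0,1]$, Lemma~$\ref{20-7-26-40}$ with~$1+\alpha=1/\beta$ and~$h\equiv(1+2C)^{1/\beta}$ via~$(\ref{20-7-30-5})$, Lemma~$\ref{lemma 8-20-5-12}$ for general bounded~$B$, and Theorem~$\ref{20-8-5-3}$ for the attractor. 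One small repair: your covering argument only gives the~$T$-step monotonicity~$\omega(t+T)\leq\omega(t)$, which by itself yields decay only along arithmetic progressions; the global monotonicity of~$t\mapsto\alpha(S(t)\mathcal{B}_0)$ needed to produce a genuine~$t_0$ follows immediately from positive invariance, since~$S(t')\mathcal{B}_0=S(t)S(t'-t)\mathcal{B}_0\subseteq S(t)\mathcal{B}_0$ for~$t'\geq t$, exactly as the paper invokes it.
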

\begin{proof}
For all~$y_1,y_2\in \mathcal{B}_0$~satisfying
~$\varrho_{T}^{i}(y_1,y_2)\leq \delta_0(i=1,2,\ldots,m)$, it follows from~$(\ref{20-7-27-25})$ that
\begin{equation*}
\begin{split}
\big(d(S(T)y_1,S(T)y_2)\big)^{\frac{2}{\beta}}
\leq &(2C)^{\frac{1}{\beta}}\bigg[ \big(d(y_1,y_2)\big)^2-\big(d(S(T)y_1,S(T)y_2)\big)^2\\&+g_1\Big(\varrho_{T}^1\big(y_1,y_2\big),\varrho_{T}^2\big(y_1,y_2\big),\ldots,\varrho_{T}^m\big(y_1,y_2\big)\Big)\bigg]
\\&+2^{\frac{1}{\beta}}g_2^{\frac{1}{\beta}}\Big(\varrho_{T}^1\big(y_1,y_2\big),\varrho_{T}^2\big(y_1,y_2\big),\ldots,\varrho_{T}^m\big(y_1,y_2\big)\Big),
\end{split}
\end{equation*}
which yields
\begin{equation}\label{21-4-18-5}
\begin{split}
&(2C)^{-\frac{1}{\beta}}\big(d(S(T)y_1,S(T)y_2)\big)^{\frac{2}{\beta}}+\big(d(S(T)y_1,S(T)y_2)\big)^2
\\ \leq & \big(d(y_1,y_2)\big)^2+g_1\Big(\varrho_{T}^1\big(y_1,y_2\big),\varrho_{T}^2\big(y_1,y_2\big),\ldots,\varrho_{T}^m\big(y_1,y_2\big)\Big)
\\&+C^{-\frac{1}{\beta}}g_2^{\frac{1}{\beta}}\Big(\varrho_{T}^1\big(y_1,y_2\big),\varrho_{T}^2\big(y_1,y_2\big),\ldots,\varrho_{T}^m\big(y_1,y_2\big)\Big).
\end{split}
\end{equation}
We rewrite $(\ref{21-4-18-5})$ as
\begin{equation}\label{21-4-18-6}
\begin{split}
&w\Big(\big(d(S(T)y_1,S(T)y_2)\big)^2\Big)
\\ \leq & \big(d(y_1,y_2)\big)^2+g_1\Big(\varrho_{T}^1\big(y_1,y_2\big),\varrho_{T}^2\big(y_1,y_2\big),\ldots,\varrho_{T}^m\big(y_1,y_2\big)\Big)
\\&+C^{-\frac{1}{\beta}}g_2^{\frac{1}{\beta}}\Big(\varrho_{T}^1\big(y_1,y_2\big),\varrho_{T}^2\big(y_1,y_2\big),\ldots,\varrho_{T}^m\big(y_1,y_2\big)\Big)
\end{split}
\end{equation}
with~$w(s)=(2C)^{-\frac{1}{\beta}}s^{\frac{1}{\beta}}+s,\ s\geq 0$. We denote by~$w^{-1}$
the inverse function of~$w$ on~$\mathds{R}^+$. Since~$w^{-1}$ is increasing, $(\ref{21-4-18-6})$ implies that
\begin{equation}\label{21-4-18-7}
\begin{split}
&\big(d(S(T)y_1,S(T)y_2)\big)^2
\\ \leq & w^{-1}\bigg( \big(d(y_1,y_2)\big)^2+g_1\Big(\varrho_{T}^1\big(y_1,y_2\big),\varrho_{T}^2\big(y_1,y_2\big),\ldots,\varrho_{T}^m\big(y_1,y_2\big)\Big)
\\&+C^{-\frac{1}{\beta}}g_2^{\frac{1}{\beta}}\Big(\varrho_{T}^1\big(y_1,y_2\big),\varrho_{T}^2\big(y_1,y_2\big),\ldots,\varrho_{T}^m\big(y_1,y_2\big)\Big)\bigg).
\end{split}
\end{equation}
Moreover, it is easy to check that~$w^{-1}(0)=0$ and $w^{-1}(s)<0,\ s>0$. Thus,  by Lemma~$\ref{lem21-4-18-1}$ we deduce from inequality~$(\ref{21-4-18-7})$ that~
\begin{equation}
\alpha(S(t)\mathcal{B}_0)\rightarrow 0\ \text{as}\ t\rightarrow+\infty.
\end{equation}
Consequently,  there exists~$t_0>0$~such that
\begin{equation}\label{20-7-27-5}
 \alpha(S(t)\mathcal{B}_0)<1,\ \forall t\geq t_0.
\end{equation}
For each fixed~$t\geq t_0$~and each~$\epsilon>0$,
by Definition~$\ref{def20-10-26-1}$, there exist sets~$F_1,F_2,\ldots,F_n$~such that
\begin{equation}\label{20-7-27-6}
  S(t)\mathcal{B}_0\subseteq\cup_{j=1}^{n}F_n,\ \mathrm{diam} F_j<\alpha(S(t)\mathcal{B}_0)+\epsilon.
\end{equation}
It follows from assumption (i) that there exists
~$\delta>0$~such that~$g_l(x_1,x_2,\ldots,x_m)<\epsilon$~$(l=1,2)$~whenever~$x_i\in [0,\delta]\ (i=1,2,\ldots,m)$. By the precompactness of
~$\varrho_{T}^i(i=1,2,\ldots,m)$, there exists a finite set ~$\mathcal{N}^i=\{x^i_{j}:j=1,2,\ldots,k_i\}\subseteq \mathcal{B}_0$~such that for every~$y\in \mathcal{B}_0$~there is~$x^i_{j}\in \mathcal{N}^i$~with the property ~$\varrho_{T}^i(S(t)y,S(t)x^i_{j})\leq\frac{1}{2}\min\{\delta,\delta_0\}$, i.e.,
\begin{equation}\label{20-7-27-9}
  S(t)\mathcal{B}_0\subseteq\cup_{j=1}^{k_i}C_{j}^{i},\ C_{j}^{i}=\big\{S(t)y:y\in \mathcal{B}_0,\ \varrho_{T}^i(S(t)y,S(t)x^i_{j})\leq\frac{1}{2}\min\{\delta,\delta_0\}\big\},\ i=1,\ldots,m.
\end{equation}
Consequently, we have
\begin{equation*}
  S(t)\mathcal{B}_0\subseteq\cup_{j_1,j_2,\ldots,j_m,j}( C_{j_1}^{1}\cap C_{j_2}^{2}\cap\ldots\cap C_{j_m}^{m}\cap F_j)
\end{equation*}
and
\begin{equation*}
  S(t+T)\mathcal{B}_0\subseteq\cup_{j_1,j_2,\ldots,j_m,j}\big(S(T)( C_{j_1}^{1}\cap C_{j_2}^{2}\cap\ldots\cap C_{j_m}^{m}\cap F_j)\big).
\end{equation*}
By~$(\ref{20-7-27-6})$~and~$(\ref{20-7-27-9})$, for any~$y_1,y_2\in\mathcal{B}_0$ such that $S(t)y_1,S(t)y_2\in C_{j_1}^{1}\cap C_{j_2}^{2}\cap\ldots\cap C_{j_m}^{m}\cap F_j$, we have
\begin{equation}\label{20-7-27-11}
  d\big(S(t)y_1,S(t)y_2\big)\leq \mathrm{diam} F_j<\alpha(S(t)\mathcal{B}_0)+\epsilon
\end{equation}
and
\begin{equation}\label{20-7-27-12}
  \varrho_{T}^i\big(S(t)y_1,S(t)y_2\big)\leq \min\{\delta,\delta_0\} (i=1,2,\ldots,m).
\end{equation}

Inequality~$(\ref{20-7-27-12})$~implies
\begin{equation}\label{20-7-27-13}
  g_l\Big(\varrho_{T}^1\big(S(t)y_1,S(t)y_2\big),\ldots,\varrho_{T}^m\big(S(t)y_1,S(t)y_2\big)\Big)<\epsilon,\ l=1,2.
\end{equation}
We deduce from~$(\ref{20-8-3-20})$,~$(\ref{20-7-27-25})$~and ~$(\ref{20-7-27-12})$~that
\begin{equation*}
\begin{split}
&\big(d(S(T+t)y_1,S(T+t)y_2)\big)^{\frac{2}{\beta}}\\\leq&(2C)^{\frac{1}{\beta}}\big[\big(d(S(t)y_1,S(t)y_2)\big)^2-\big(d(S(T+t)y_1,S(T+t)y_2)\big)^2\\&+g_1\big(\varrho_{T}^1(S(t)y_1,S(t)y_2),\ldots,\varrho_{T}^m(S(t)y_1,S(t)y_2)\big)\big]\\&+2^{\frac{1}{\beta}}g_2^{\frac{1}{\beta}}\big(\varrho_{T}^1(S(t)y_1,S(t)y_2),\ldots,\varrho_{T}^m(S(t)y_1,S(t)y_2)\big),
 \end{split}
\end{equation*}
which yields
\begin{equation}\label{20-8-3-31}
\begin{split}
&(2C)^{-\frac{1}{\beta}}\big(d(S(T+t)y_1,S(T+t)y_2)\big)^{\frac{2}{\beta}}+\big(d(S(T+t)y_1,S(T+t)y_2)\big)^{2}\\ \leq &\big(d(S(t)y_1,S(t)y_2)\big)^2+g_1\big(\varrho_{T}^1(S(t)y_1,S(t)y_2),\ldots,\varrho_{T}^m(S(t)y_1,S(t)y_2)\big)\\&+C^{-\frac{1}{\beta}}g_2^{\frac{1}{\beta}}\big(\varrho_{T}^1(S(t)y_1,S(t)y_2),\ldots,\varrho_{T}^m(S(t)y_1,S(t)y_2)\big),
 \end{split}
\end{equation}
i.e.,
\begin{equation}\label{20-8-3-32}
\begin{split}
&w\Big(\big(d(S(T+t)y_1,S(T+t)y_2\big)^{2}\Big)\\ \leq &\big(d(S(t)y_1,S(t)y_2)\big)^2+g_1\big(\varrho_{T}^1(S(t)y_1,S(t)y_2),\ldots,\varrho_{T}^m(S(t)y_1,S(t)y_2)\big)\\&+C^{-\frac{1}{\beta}}g_2^{\frac{1}{\beta}}\big(\varrho_{T}^1(S(t)y_1,S(t)y_2),\ldots,\varrho_{T}^m(S(t)y_1,S(t)y_2)\big).
 \end{split}
\end{equation}
Since~$w(s)$~is increasing on~$\mathds{R}^{+}$,~$(\ref{20-8-3-32})$~implies
\begin{equation}\label{20-9-14-9}
\begin{split}
&\big(d(S(T+t)y_1,S(T+t)y_2\big)^{2}\\ \leq &w^{-1}\bigg(\big(d(S(t)y_1,S(t)y_2)\big)^2+g_1\big(\varrho_{T}^1(S(t)y_1,S(t)y_2),\ldots,\varrho_{T}^m(S(t)y_1,S(t)y_2)\big)\\&+C^{-\frac{1}{\beta}}g_2^{\frac{1}{\beta}}\big(\varrho_{T}^1(S(t)y_1,S(t)y_2),\ldots,\varrho_{T}^m(S(t)y_1,S(t)y_2)\big).
 \bigg).
 \end{split}
\end{equation}
We derive from $(\ref{20-7-27-11})$,~$(\ref{20-7-27-13})$,~$(\ref{20-9-14-9})$ and the monotonically increasing property of~$w^{-1}$~that
\begin{equation*}
\begin{split}
\big(d(S(T+t)y_1,S(T+t)y_2\big)^{2} \leq w^{-1}\Big(\big(\alpha(S(t)\mathcal{B}_0)+\epsilon\big)^2+\epsilon+C^{-\frac{1}{\beta}}\epsilon^{\frac{1}{\beta}}\Big).
 \end{split}
\end{equation*}

As a consequence,
\begin{equation*}
\begin{split}
\big(\alpha(S(t+T)\mathcal{B}_0)\big)^2\leq w^{-1}\Big(\big(\alpha(S(t)\mathcal{B}_0)+\epsilon\big)^2+\epsilon+C^{-\frac{1}{\beta}}\epsilon^{\frac{1}{\beta}}\Big).
 \end{split}
\end{equation*}
Hence by the arbitrariness of~$\epsilon$, we have
\begin{equation}\label{20-8-3-44}
\begin{split}
w\Big(\big(\alpha(S(t+T)\mathcal{B}_0)\big)^2\Big)
\leq \big(\alpha(S(t)\mathcal{B}_0)\big)^2.
 \end{split}
\end{equation}
Inequality~$(\ref{20-8-3-44})$~is equivalent to
\begin{equation}\label{20-8-3-45}
\begin{split}
\big(\alpha(S(t+T)\mathcal{B}_0)\big)^{2}
\leq 2C\Big[\big(\alpha(S(t)\mathcal{B}_0)\big)^2-\big(\alpha(S(t+T)\mathcal{B}_0)\big)^2\Big]^{\beta}.
 \end{split}
\end{equation} Since~$\alpha(S(t+T)\mathcal{B}_0)\leq\alpha(S(t)\mathcal{B}_0)<1$~holds for all~$t\geq t_0$, we have
\begin{equation}\label{20-7-27-63}
(\alpha(S(t)\mathcal{B}_0))^2-(\alpha(S(t+T)\mathcal{B}_0))^2 \leq \big[(\alpha(S(t)\mathcal{B}_0))^2-(\alpha(S(t+T)\mathcal{B}_0))^2\big]^{\beta}.
\end{equation}
It follows from~$(\ref{20-8-3-45})$~and~$(\ref{20-7-27-63})$~that
\begin{equation*}
\begin{split}
(\alpha(S(t)\mathcal{B}_0))^2=&(\alpha(S(t+T)\mathcal{B}_0))^2+(\alpha(S(t)\mathcal{B}_0))^2-(\alpha(S(t+T)\mathcal{B}_0))^2\\
\leq&(1+ 2C)\big[(\alpha(S(t)\mathcal{B}_0))^2-(\alpha(S(t+T)\mathcal{B}_0))^2\big]^{\beta},
\end{split}
\end{equation*}
i.e.,
\begin{equation}\label{20-7-27-64}
\begin{split}
(\alpha(S(t)\mathcal{B}_0))^{\frac{2}{\beta}}
\leq(1+ 2C)^{\frac{1}{\beta}}\big[(\alpha(S(t)\mathcal{B}_0))^2-(\alpha(S(t+T)\mathcal{B}_0))^2\big]
\end{split}
\end{equation}
holds for all~$t\geq t_0$.
Since~$\mathcal{B}_0$~is positively invariant,~$\alpha(S(t)\mathcal{B}_0)$~is non-increasing with respect to~$t$. Therefore it follows from~$(\ref{20-7-27-64})$~that~$(\ref{20-7-26-41})$~holds with~$w(t)=(\alpha(S(t)\mathcal{B}_0))^2$,~$1+\alpha=\frac{1}{\beta}$~and~$h(t)=(1+ 2C)^{\frac{1}{\beta}}$. Consequently, by Lemma~$\ref{20-7-26-40}$, we have
\begin{equation*}
\begin{split}
\alpha(S(t)\mathcal{B}_0)\leq \Big\{(\alpha(\mathcal{B}_0))^{\frac{2(\beta-1)}{\beta}}+\frac{1-\beta}{T\beta(1+2C)^{\frac{1}{\beta}}}\big(t-t_0-2T\big)\Big\}^{\frac{\beta}{2(\beta-1)}},\ \forall t\geq t_0+2T,
\end{split}
\end{equation*}
which, together with lemma~$\ref{lemma 8-20-5-12}$, gives~$(\ref{20-9-29-2})$.

By Theorem~$\ref{20-8-5-3}$,~$(X, \{S(t)\}_{t\geq0})$~possesses a polynomial attractor~$\mathcal{A}^*$
such that for every bounded set~$B\subseteq X$,
\begin{equation}
\begin{split}
\mathrm{ dist}\left(S(t)B, \mathcal{A}^*\right)\leq \Big\{(\alpha(\mathcal{B}_0))^{\frac{2(\beta-1)}{\beta}}+\frac{1-\beta}{T\beta(1+2C)^{\frac{1}{\beta}}}\big(t-t_0-2T-t_{*}(B)-1\big)\Big\}^{\frac{\beta}{2(\beta-1)}}
\end{split}
\end{equation}
holds for all~$t\geq t_0+2T+t_{*}(B)+1$.
\end{proof}

\section{Existence of polynomial attractors and estimate of their attractive velocity for a class of wave equations}\label{sec20-8-10-89}

 We consider the following initial-boundary value problem:
\begin{align}
\label{wave equa}&u_{tt}-\Delta u+k||u_{t}||^p u_t+f(u)
=\displaystyle\int_{\Omega}K(x,y)u_{t}(y)dy+h(x) \ \ \text{in}\ [0,\infty)\times\Omega,\\
\label{boundary condition}&u=0 \ \text{on}\  [0,\infty)\times \partial\Omega,\\
\label{initial condition}&u(x,0)=u_0(x) , u_{t}(x,0)=u_{1}(x),\  x\in\Omega,
\end{align}
where~$k$~and~$p$~are positive constants,~$K\in L^2(\Omega\times\Omega)$,~$h\in L^2(\Omega)$.~$f\in C^1(\mathds{R})$~satisfies the following polynomial growth condition: there exists a positive constant~$M$~such that
\begin{equation}\label{growth}
    |f'(s)| \leq M|s|^q \ \text{for all}\   |s| \geq 1,
  \end{equation}
where~$0<q <\displaystyle\frac{2}{n-2}$~if~$n\geq3$ and~$0<q<\infty$~if~$n\leq2$. Moreover, the following dissipativity condition holds:
\begin{equation}\label{dissipativity condition}
    \liminf_{|s|\rightarrow\infty}f'(s)\equiv \mu > -\lambda_{1},
  \end{equation}
where~$\lambda_{1}$~is the first eigenvalue of the operator~$-\Delta$~equipped with Dirichlet boundary condition.

Recently, we have proved the following result in \cite{my1}.
\begin{lemma}\cite{my1}\label{20-9-30-1}
Let~$T> 0$~be arbitrary. Under the above assumptions, for every~$(u_0,u_1) \in H^1_0(\Omega)\times L^2(\Omega)$, the initial
boundary value problem~$(\ref{wave equa})$-$(\ref{initial condition})$~has a unique weak solution~$u \in C([0,T];H^1_0(\Omega)) \cap C^1([0,T];L^2(\Omega))$, which generates the semigroup
\begin{equation*}
  S(t)(u_0,u_1) = (u(t),u_t (t)),\ t\geq0
\end{equation*}
on~$H^1_0(\Omega)\times L^2(\Omega) $.

Furthermore, the semigroup~$\{S(t)\}_{t\geq0}$~is dissipative, which implies the existence of a positively invariant bounded absorbing set~$\mathcal{B}_0$.
\end{lemma}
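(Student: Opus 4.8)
The plan is to prove the two assertions---well-posedness of the semigroup and dissipativity---separately, working throughout in the energy space $\mathcal{H}=H^1_0(\Omega)\times L^2(\Omega)$. First I would record the natural energy identity. Writing $F(s)=\int_0^s f(\sigma)\,d\sigma$ and letting $\mathcal{K}$ denote the Hilbert--Schmidt operator $(\mathcal{K}v)(x)=\int_\Omega K(x,y)v(y)\,dy$ (so that $\|\mathcal{K}v\|\le\|K\|_{L^2(\Omega\times\Omega)}\|v\|$), multiplying $(\ref{wave equa})$ by $u_t$ gives
\begin{equation*}
\frac{d}{dt}\Big[\tfrac12\|u_t\|^2+\tfrac12\|\nabla u\|^2+\int_\Omega F(u)\,dx-(h,u)\Big]+k\|u_t\|^{p+2}=(\mathcal{K}u_t,u_t).
\end{equation*}
The dissipativity condition $(\ref{dissipativity condition})$ together with the Poincar\'e inequality $\|\nabla u\|^2\ge\lambda_1\|u\|^2$ guarantees that $\tfrac12\|\nabla u\|^2+\int_\Omega F(u)\,dx$ is bounded below and coercive in $\|\nabla u\|^2$, since $\mu>-\lambda_1$; thus this identity already controls $\|(u,u_t)\|_{\mathcal H}$ in terms of the initial data on any finite interval.

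For existence I would run a Faedo--Galerkin scheme on the Dirichlet eigenbasis $\{w_j\}$ of $-\Delta$. The projected system has a locally Lipschitz right-hand side (the nonlocal damping $\|u_t\|^p u_t$, the operator $\mathcal K$, and the $C^1$ nonlinearity $f$ are all locally Lipschitz on finite-dimensional subspaces), hence admits local solutions; the energy identity applied to the approximations yields uniform-in-$N$ bounds for $u^N$ in $L^\infty(0,T;H^1_0)$ and $u^N_t$ in $L^\infty(0,T;L^2)$, which preclude blow-up and provide weak-$*$ limits. Passing to the limit in the nonlinear terms is where the subcriticality $q<2/(n-2)$ is essential: by the Aubin--Lions lemma one extracts strong convergence $u^N\to u$ in $C([0,T];L^2)$ and a.e., and the growth bound $(\ref{growth})$ makes the Nemytskii map $u\mapsto f(u)$ continuous on the relevant Lebesgue space so that $f(u^N)\to f(u)$; the nonlocal damping is easier, since $\|u^N_t\|^p$ is a scalar sequence converging along subsequences while $u^N_t\rightharpoonup u_t$. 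A standard argument upgrading weak to strong time-continuity via the energy identity then yields $u\in C([0,T];H^1_0)\cap C^1([0,T];L^2)$.

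For uniqueness and continuous dependence I would take two solutions $u,v$, set $w=u-v$, subtract the equations and multiply by $w_t$. The crucial sign is that the nonlocal damping operator $v\mapsto\|v\|^p v$ is monotone (being the gradient of the convex functional $\tfrac1{p+2}\|v\|^{p+2}$), so $k(\|u_t\|^p u_t-\|v_t\|^p v_t,\,u_t-v_t)\ge0$ may be discarded---this is the strong monotone inequality the paper alludes to. The difference $f(u)-f(v)$ is estimated by the mean value theorem together with $(\ref{growth})$ and the Sobolev embedding, giving a bound by $\|w\|_{H^1_0}$ with a constant depending only on the finite energy of $u,v$, while $(\mathcal K w_t,w_t)$ is dominated by $\|K\|_{L^2}\|w_t\|^2$. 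Gronwall's inequality then delivers uniqueness and Lipschitz dependence of $(u,u_t)$ on the data on each $[0,T]$, so that $S(t)(u_0,u_1)=(u(t),u_t(t))$ is a well-defined, continuous semigroup on $\mathcal H$.

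Finally, for dissipativity I would form the perturbed energy $\Phi=\mathcal E+\varepsilon(u,u_t)$, with $\mathcal E$ the bracketed functional above and $\varepsilon>0$ small, and differentiate along the flow using $\frac{d}{dt}(u,u_t)=\|u_t\|^2-\|\nabla u\|^2-(f(u),u)-k\|u_t\|^p(u,u_t)+(\mathcal Ku_t,u)+(h,u)$. The coercivity furnished by $(\ref{dissipativity condition})$ makes $\|\nabla u\|^2+(f(u),u)$ bound $\Phi$ from below by a positive multiple of the energy and produces a differential inequality of the form $\frac{d}{dt}\Phi+c\,\Phi\le C$, whence $\limsup_{t\to\infty}\|S(t)(u_0,u_1)\|_{\mathcal H}\le R$ with $R$ independent of the data; the ball of radius $R$ in $\mathcal H$ is then a bounded absorbing set, and Lemma~\ref{20-5-5-2} upgrades it to a positively invariant one $\mathcal B_0$. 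I expect the main obstacle to lie precisely here: the anti-damping $(\mathcal Ku_t,u_t)$ feeds energy into the system, and since the nonlocal damping $k\|u_t\|^{p+2}$ is weak it does not obviously dominate the $\|u_t\|^2$-scale anti-damping at small velocities, so closing the differential inequality requires carefully balancing these terms against the coercive potential (and, if necessary, exploiting the compactness of $\mathcal K$ so that only finitely many modes are destabilized).
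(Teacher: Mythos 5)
First, a point of reference: the paper itself contains no proof of this lemma at all --- it is imported verbatim from \cite{my1}, so your attempt can only be measured against that reference, whose overall architecture (Faedo--Galerkin approximation, energy identity, monotonicity of the nonlocal damping, energy-perturbation dissipativity) your sketch does follow, and your uniqueness argument via discarding the monotone damping difference is exactly right. Within the existence half, however, one step as written is incorrect: the limit passage in the damping term. Your assertion that this term is ``easier'' because $\|u^N_t\|^p$ is ``a scalar sequence converging along subsequences'' does not survive scrutiny. As a function of $t$, $\|u^N_t(t)\|$ is merely bounded, so it converges at best weak-star in $L^\infty(0,T)$, and nothing identifies its limit with $\|u_t(t)\|$: Aubin--Lions compactness applies to $u^N$ (via the bounds on $u^N_t$), not to $u^N_t$ itself, whose time derivatives are bounded only in a negative-order space, so $u^N_t\rightharpoonup u_t$ weakly and the $L^2(\Omega)$-norm may drop strictly in the limit. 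Consequently the weak limit of $\|u^N_t\|^p u^N_t$ is some unidentified $\chi$, and one must prove $\chi=\|u_t\|^p u_t$ by the Minty--Browder trick, using that $v\mapsto\|v\|^p v$ is the gradient of the convex functional $\frac{1}{p+2}\|v\|^{p+2}$, hence a monotone hemicontinuous (maximal monotone) operator on $L^2(\Omega)$. In other words, the monotonicity you reserve for uniqueness is needed already to construct the solution.

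The second, and more serious, gap is dissipativity, which you candidly leave open; but the remedies you gesture at (balancing the anti-damping against the coercive potential, or exploiting compactness of $\mathcal K$ so that ``only finitely many modes are destabilized'') are not the mechanism and would be hard to implement --- compactness of $\mathcal K$ gives no smallness of its norm and no spectral splitting compatible with the nonlinear, nonlocal damping. The missing observation is elementary: the damping is \emph{superquadratic}. Since $p+2>2$, Young's inequality gives, for every $\delta>0$, $(\mathcal K u_t,u_t)\le \|K\|_{L^2(\Omega\times\Omega)}\|u_t\|^2\le \delta\|u_t\|^{p+2}+C_\delta$ with $C_\delta$ depending only on $\delta$, $p$, $\|K\|_{L^2(\Omega\times\Omega)}$; thus the anti-damping is absorbed by $k\|u_t\|^{p+2}$ uniformly --- including at small velocities, which is precisely the regime you feared --- at the price of an additive constant, yielding $\frac{d}{dt}\mathcal E+\frac{k}{2}\|u_t\|^{p+2}\le C$. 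The same absorption disposes of the $\varepsilon\|u_t\|^2$ term generated by your perturbation, but the cross term $\varepsilon k\|u_t\|^{p}(u,u_t)$ produces, after Young with exponents $\frac{p+2}{p+1}$ and $p+2$, a power $\|u\|^{p+2}$ that the functional $\Phi$ does not dominate linearly; for this reason the linear inequality $\Phi'+c\Phi\le C$ you forecast is not available --- on bounded sets $\|u_t\|^{p+2}$ admits no lower bound of the form $c\|u_t\|^2$, consistent with the premise of this very paper that the decay here is polynomial rather than exponential. What does close is a weaker dissipation inequality (for instance, choosing $\varepsilon$ in dependence on the energy level, or integrating the energy identity over unit time intervals to obtain a Nakao-type difference inequality in the spirit of \cite{MR513082}), which still yields a bounded absorbing set; Lemma~\ref{20-5-5-2} then upgrades it to the positively invariant $\mathcal B_0$ exactly as you say. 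As written, though, the dissipativity half of your proposal is a statement of intent rather than a proof.
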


To establish the main result in this section, the following lemmas are also needed.
\begin{lemma}\cite{MR4064014}\label{lemma l2.3}
Let~$\left(H,(\cdot,\cdot)_H\right)$~be an inner product space with the induced norm~$\|\cdot\|_H$~and constant~$p>1$. Then there exists some positive constant~$C_p$~such that for any~$x,y\in H$~satisfying~$(x,y)\neq (0,0)$, we have
\begin{eqnarray}\label{2.11}
&\big(\|x\|_H^{p-2}x-\|y\|_H^{p-2}y,x-y\big)_H
&\geq
 \begin{cases}
 C_p\|x-y\|_H^{p}, ~  p\geq2;\\
C_p\displaystyle\frac{\|x-y\|_H^{2}}{(\|x\|_H+\|y\|_H)^{2-p}},~1<p<2.
 \end{cases}
\end{eqnarray}
\end{lemma}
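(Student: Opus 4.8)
The plan is to recognize the left-hand side as the monotonicity defect of the vector field $\Phi(z)=\|z\|_H^{p-2}z$, since
\begin{equation*}
\big(\|x\|_H^{p-2}x-\|y\|_H^{p-2}y,x-y\big)_H=\big(\Phi(x)-\Phi(y),x-y\big)_H.
\end{equation*}
First I would write this difference as an integral along the segment $\xi_t=(1-t)y+tx$, $t\in[0,1]$, via the fundamental theorem of calculus,
\begin{equation*}
\big(\Phi(x)-\Phi(y),x-y\big)_H=\int_0^1\big(D\Phi(\xi_t)(x-y),x-y\big)_H\,dt,
\end{equation*}
where a direct computation of the derivative gives, for $z\neq0$ and any $h\in H$,
\begin{equation*}
\big(D\Phi(z)h,h\big)_H=\|z\|_H^{p-2}\|h\|_H^2+(p-2)\|z\|_H^{p-4}(z,h)_H^2 .
\end{equation*}
Everything then reduces to bounding this quadratic form from below and integrating, with $h=x-y$ held fixed; I will treat the two ranges of $p$ by exploiting the opposite signs of the factor $p-2$.

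For $p\geq2$ the second term is nonnegative, so $\big(D\Phi(z)h,h\big)_H\geq\|z\|_H^{p-2}\|h\|_H^2$, and it remains to bound $\int_0^1\|\xi_t\|_H^{p-2}\,dt$ from below by a multiple of $\|x-y\|_H^{p-2}$. Writing $w=x-y$ and completing the square in $\|\xi_t\|_H^2=\|w\|_H^2(t-t^\ast)^2+d^2$, with $t^\ast=-(y,w)_H/\|w\|_H^2$ and $d^2\geq0$ the squared distance from the origin to the line, I get $\|\xi_t\|_H\geq\|w\|_H\,|t-t^\ast|$, hence (using $p-2\geq0$)
\begin{equation*}
\int_0^1\|\xi_t\|_H^{p-2}\,dt\geq\|w\|_H^{p-2}\int_0^1|t-t^\ast|^{p-2}\,dt\geq\frac{\|w\|_H^{p-2}}{2^{p-2}(p-1)},
\end{equation*}
the last step being the elementary fact that $t^\ast\mapsto\int_0^1|t-t^\ast|^{p-2}\,dt$ is minimized at $t^\ast=\tfrac12$. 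This yields the first case with $C_p=2^{2-p}/(p-1)$ (any positive constant suffices for the statement).

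For $1<p<2$ the factor $p-2$ is negative, so applying Cauchy--Schwarz $(z,h)_H^2\leq\|z\|_H^2\|h\|_H^2$ to the now-negative second term gives $\big(D\Phi(z)h,h\big)_H\geq(p-1)\|z\|_H^{p-2}\|h\|_H^2$. Since $p-2<0$ and $\|\xi_t\|_H\leq(1-t)\|y\|_H+t\|x\|_H\leq\|x\|_H+\|y\|_H$, we have $\|\xi_t\|_H^{p-2}\geq(\|x\|_H+\|y\|_H)^{p-2}$ for every $t$, whence
\begin{equation*}
\big(\Phi(x)-\Phi(y),x-y\big)_H\geq(p-1)\,(\|x\|_H+\|y\|_H)^{p-2}\,\|x-y\|_H^2=(p-1)\frac{\|x-y\|_H^2}{(\|x\|_H+\|y\|_H)^{2-p}},
\end{equation*}
which is the second case with $C_p=p-1$; here the hypothesis $(x,y)\neq(0,0)$ is exactly what guarantees $\|x\|_H+\|y\|_H>0$. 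I expect the only delicate point to be justifying the integral representation, because for $1<p<2$ the field $\Phi$ fails to be differentiable at the origin, and the segment $\xi_t$ meets the origin precisely when $x$ and $y$ are antiparallel. Since $p-2>-1$, the integrand $\|\xi_t\|_H^{p-2}$ is still integrable across such a crossing and $t\mapsto\Phi(\xi_t)$ is absolutely continuous on $[0,1]$, so the fundamental theorem of calculus remains valid; alternatively one may first prove the estimate on segments avoiding the origin and pass to the limit, or reduce at the outset to the plane $\mathrm{span}\{x,y\}$, since the whole inequality depends on $x,y$ only through $\|x\|_H$, $\|y\|_H$ and $(x,y)_H$. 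Handling this singularity cleanly is the main obstacle; the two pointwise estimates and the elementary one-dimensional integral bound are routine.
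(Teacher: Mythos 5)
Your proof is correct, and a comparison with ``the paper's proof'' is moot in the strict sense: the paper states this lemma verbatim from \cite{MR4064014} and uses it as a black box, offering no argument of its own, so your write-up is a genuine self-contained substitute rather than a variant of something in the text. Checking the details: the formula $\big(D\Phi(z)h,h\big)_H=\|z\|_H^{p-2}\|h\|_H^2+(p-2)\|z\|_H^{p-4}(z,h)_H^2$ is right; dropping the second term for $p\geq2$ and applying Cauchy--Schwarz to it for $1<p<2$ gives the correct pointwise bounds; the completion of the square yields $\|\xi_t\|_H\geq\|x-y\|_H|t-t^\ast|$; and the minimization of $t^\ast\mapsto\int_0^1|t-t^\ast|^{p-2}dt$ at $t^\ast=\tfrac12$ (convexity of $s\mapsto s^{p-1}$ on $[0,1]$, monotonicity outside) produces the explicit constants $C_p=2^{2-p}/(p-1)$ and $C_p=p-1$, which is more information than the statement requires. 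You also correctly isolate the one delicate point: for $1<p<2$ the segment can cross the origin, but since $\|\xi_t\|_H^{p-2}\leq\|x-y\|_H^{p-2}|t-t^\ast|^{p-2}$ with $p-2>-1$, the derivative of $t\mapsto\Phi(\xi_t)$ is integrable and the map is absolutely continuous, so the fundamental theorem of calculus survives; your fallback reductions (perturbing off the origin, or working in $\mathrm{span}\{x,y\}$, where the whole inequality lives since it depends only on $\|x\|_H$, $\|y\|_H$, $(x,y)_H$) are both valid, and you are right that $(x,y)\neq(0,0)$ serves only to keep the denominator positive. For contrast, inequalities of this type are often proved purely algebraically, starting from the identity $\big(\|x\|_H^{p-2}x-\|y\|_H^{p-2}y,\,x-y\big)_H=\tfrac12\big(\|x\|_H^{p-2}+\|y\|_H^{p-2}\big)\|x-y\|_H^2+\tfrac12\big(\|x\|_H^{p-2}-\|y\|_H^{p-2}\big)\big(\|x\|_H^2-\|y\|_H^2\big)$, whose second term is nonnegative when $p\geq2$; combined with $\max\{\|x\|_H,\|y\|_H\}\geq\tfrac12\|x-y\|_H$ this settles the first case with no differentiability questions at all, though the subquadratic case then needs separate work. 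Your integral-representation route handles both ranges uniformly and exhibits the result as strong monotonicity of the gradient field $\Phi=\nabla\big(\tfrac1p\|\cdot\|_H^p\big)$, at the modest price of the absolute-continuity discussion you already supplied.
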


\begin{lemma}\citep[Corollary 4]{Simon1986}\label{lemma 1-11-1}
Assume~$X\hookrightarrow\hookrightarrow B \hookrightarrow Y$~where~$X, B, Y$~are Banach spaces. The following statements hold.
\begin{enumerate}[(i)]
\item  Let~$F$~be bounded in~$L^{p}(0,T;X)$~where~$1\leq p<\infty$, and~$\partial F/\partial t=\{\partial f/\partial t:f\in F\}$~be bounded in $L^{1}(0,T;Y)$, where~$\partial /\partial t$~is the weak time derivative. Then~$F$~is relatively compact in~$L^{p}(0,T;B)$.
  \item Let~$F$~be bounded in~$L^{\infty}(0,T;X)$~and~$\partial F/\partial t$~be bounded in~$L^{r}(0,T;Y)$~where~$r>1$. Then~$F$~is relatively compact in~$C(0,T;B)$.
\end{enumerate}
\end{lemma}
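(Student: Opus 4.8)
The plan is to derive both assertions from two classical pillars of functional analysis: an Ehrling-type interpolation inequality tailored to the chain $X\hookrightarrow\hookrightarrow B\hookrightarrow Y$, and a vector-valued Riesz--Fr\'echet--Kolmogorov compactness criterion in the Bochner space $L^p(0,T;B)$, supplemented by the Arzel\`a--Ascoli theorem for part (ii). (This is the Aubin--Lions--Simon lemma, so I expect the skeleton to be robust.) First I would establish the interpolation lemma: for every $\eta>0$ there exists $C_\eta>0$ with $\|v\|_B\le\eta\|v\|_X+C_\eta\|v\|_Y$ for all $v\in X$. This follows by contradiction: were it to fail for some $\eta$, one could pick $v_n$ with $\|v_n\|_B=1$ yet $1>\eta\|v_n\|_X+n\|v_n\|_Y$, forcing $\{v_n\}$ bounded in $X$ and $\|v_n\|_Y\to0$; the compact embedding $X\hookrightarrow\hookrightarrow B$ then yields a $B$-convergent subsequence whose limit must vanish (since $B\hookrightarrow Y$ and $\|v_n\|_Y\to0$), contradicting $\|v_n\|_B=1$.

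For part (i) I would verify the two hypotheses under which a bounded subset of $L^p(0,T;B)$, $1\le p<\infty$, is relatively compact: (a) for all $0<t_1<t_2<T$ the set of integrals $\{\int_{t_1}^{t_2}f\,dt:\ f\in F\}$ is relatively compact in $B$; and (b) the time translates are uniformly small, $\sup_{f\in F}\|\tau_hf-f\|_{L^p(0,T-h;B)}\to0$ as $h\to0^+$. Condition (a) is immediate from H\"older's inequality, $\|\int_{t_1}^{t_2}f\,dt\|_X\le(t_2-t_1)^{1-1/p}\sup_F\|f\|_{L^p(0,T;X)}$, so these integrals form a bounded subset of $X$, which $X\hookrightarrow\hookrightarrow B$ renders relatively compact in $B$. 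For (b) I would apply the interpolation lemma pointwise in time and integrate, obtaining $\|\tau_hf-f\|_{L^p(0,T-h;B)}\le2\eta\sup_F\|f\|_{L^p(0,T;X)}+C_\eta\|\tau_hf-f\|_{L^p(0,T-h;Y)}$; the first summand is absorbed by taking $\eta$ small, so everything reduces to the uniform vanishing of the $Y$-valued translates.

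I expect the main obstacle to sit exactly here. Writing $f(t+h)-f(t)=\int_t^{t+h}\partial_sf\,ds$ and using the uniform bound on $\partial_tf$ in $L^1(0,T;Y)$ yields, by Fubini, only the $L^1$-in-time estimate $\|\tau_hf-f\|_{L^1(0,T-h;Y)}\le h\sup_F\|\partial_tf\|_{L^1(0,T;Y)}\to0$, whereas condition (b) demands smallness in the stronger $L^p$-in-time norm. Interpolating this $L^1(Y)$-smallness against the uniform $L^p(Y)$-bound (which follows from $X\hookrightarrow Y$) gives smallness of the translates in $L^q(0,T-h;Y)$ for every $q<p$, but the endpoint $q=p$ is not reached this way; closing that endpoint -- promoting weak $L^1$-in-time control of the derivative to genuine $L^p(0,T;B)$-compactness -- is the delicate heart of the argument. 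I would resolve it not through the naive translation bound but via a Helly-type selection (using that each time-slice is precompact in $B$) producing a subsequence convergent in $B$ for a.e. $t$, and then an equi-integrability (Vitali) argument upgrading this pointwise convergence to convergence in $L^p(0,T;B)$.

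For part (ii) the stronger hypothesis dissolves this difficulty. Since $\partial_tf$ is bounded in $L^r(0,T;Y)$ with $r>1$, each $f$ belongs to $W^{1,r}(0,T;Y)\hookrightarrow C^{0,1-1/r}([0,T];Y)$, giving the uniform H\"older bound $\|f(t)-f(s)\|_Y\le\sup_F\|\partial_tf\|_{L^r(0,T;Y)}\,|t-s|^{1-1/r}$. Feeding this into the interpolation lemma, $\|f(t)-f(s)\|_B\le2\eta\sup_F\|f\|_{L^\infty(0,T;X)}+C_\eta\sup_F\|\partial_tf\|_{L^r(0,T;Y)}\,|t-s|^{1-1/r}$, and choosing first $\eta$ and then $|t-s|$ small produces genuine uniform equicontinuity of $F$ in $C([0,T];B)$. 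Boundedness of $F$ in $L^\infty(0,T;X)$ makes each slice $\{f(t):f\in F\}$ bounded in $X$, hence relatively compact in $B$; uniform equicontinuity together with pointwise relative compactness is precisely the hypothesis of the Arzel\`a--Ascoli theorem, which delivers relative compactness of $F$ in $C([0,T];B)$ and finishes the proof.
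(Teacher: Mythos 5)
You should first be aware that the paper contains no proof of this lemma: it is imported verbatim as Corollary~4 of Simon's paper \emph{Compact sets in the space $L^p(0,T;B)$}, so your attempt can only be compared with Simon's original argument. Your skeleton matches his: Ehrling's interpolation inequality for $X\hookrightarrow\hookrightarrow B\hookrightarrow Y$, the translation/integral compactness criterion in $L^p(0,T;B)$ for part (i), and Arzel\`a--Ascoli for part (ii). Your part (ii) is essentially correct (modulo the routine null-set adjustment needed to pass from the essential bound $\|f(t)\|_X\le M$ a.e.\ to slices lying in the $B$-compact set $\overline{B_X(M)}^{\,B}$ for all $t$), and you correctly located the one delicate point in part (i): promoting the $L^1$-in-time translate estimate $\|\tau_h f-f\|_{L^1(0,T-h;Y)}\le h\sup_F\|\partial_t f\|_{L^1(0,T;Y)}$ to the endpoint exponent $p$.

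However, your proposed resolution of that endpoint has a genuine gap, on both of its legs. A Helly-type selection needs precompactness in $B$ of the time-slices $\{f(t):f\in F\}$ at (a.e.) fixed $t$, but boundedness of $F$ in $L^p(0,T;X)$ is an integrated bound and gives no uniform control of $\|f(t)\|_X$ at any fixed time (consider bumps $f_n(t)=n^{1/p}\phi(nt)\,x_0$, bounded in $L^p(X)$ with slices blowing up), so ``each time-slice is precompact in $B$'' is unjustified and the selection cannot start; moreover, even granted an a.e.-in-$B$ convergent subsequence, Vitali's theorem for convergence in $L^p(0,T;B)$ requires uniform integrability of $\{\|f_n(\cdot)\|_B^p\}$, which bare $L^p(X)$-boundedness does not supply. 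The missing idea --- and it is exactly how Simon closes the endpoint --- is that the hypotheses make $F$ bounded in $W^{1,1}(0,T;Y)$ and hence uniformly bounded in $L^\infty(0,T;Y)$, via $\|f(t)\|_Y\le \frac{1}{T}\|f\|_{L^1(0,T;Y)}+\|\partial_t f\|_{L^1(0,T;Y)}$ together with $\|f\|_{L^1(Y)}\le C\,T^{1-1/p}\|f\|_{L^p(X)}$. One then interpolates the translates against this $L^\infty(Y)$ bound instead of the $L^p(Y)$ bound you used:
\begin{equation*}
\|\tau_h f-f\|_{L^p(0,T-h;Y)}\le \|\tau_h f-f\|_{L^1(0,T-h;Y)}^{1/p}\,\|\tau_h f-f\|_{L^\infty(0,T-h;Y)}^{1-1/p}\le C\,h^{1/p},
\end{equation*}
uniformly in $f\in F$, which reaches the exponent $p$ exactly; your own Ehrling reduction then completes the verification of the criterion. (The same $L^\infty(Y)$ bound would also repair your Vitali route, since Ehrling pointwise gives $\|f(t)\|_B^p\le 2^{p-1}\big(\eta^p\|f(t)\|_X^p+C_\eta^p\,\|f\|_{L^\infty(Y)}^p\big)$, making $\|f\|_B^p$ uniformly integrable, and an a.e.\ convergent subsequence is already available from your $L^q(B)$-compactness for $q<p$ --- no Helly selection needed.)
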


\begin{lemma}\cite{Lax}\label{lemma 10-10-3}
The integral operator
\begin{eqnarray*}
K:L^{2}(\Omega  )&\longrightarrow &L^{2}(\Omega) \\
v&\longmapsto &\int_{\Omega}K(x,y)v(y)dy
\end{eqnarray*}
is a compact operator provided that the kernel~$K(x,y)$~is square-integrable.
\end{lemma}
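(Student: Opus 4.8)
The plan is to realize the operator $K$ as a limit, in the operator norm, of finite-rank operators, and then invoke the fact that the compact operators form a norm-closed subspace of the bounded linear operators on $L^2(\Omega)$. The entire argument rests on the Hilbert--Schmidt structure of the kernel, so the first thing to pin down is the operator-norm estimate that links $L^2$-size of the kernel to operator-norm size of the operator.

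First I would verify boundedness together with that crucial estimate. For $v\in L^2(\Omega)$, applying the Cauchy--Schwarz inequality in the $y$-variable gives
\[
\|Kv\|^2=\int_\Omega\Big|\int_\Omega K(x,y)v(y)\,dy\Big|^2dx\leq\int_\Omega\Big(\int_\Omega|K(x,y)|^2\,dy\Big)dx\;\|v\|^2=\|K\|_{L^2(\Omega\times\Omega)}^2\,\|v\|^2,
\]
so that $K$ is bounded with $\|K\|_{\mathrm{op}}\leq\|K\|_{L^2(\Omega\times\Omega)}$. This inequality is the engine of the proof: it converts $L^2$-convergence of kernels into operator-norm convergence of the associated operators.

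Next I would construct finite-rank approximations by separating variables. Since $\Omega$ is a bounded domain, $L^2(\Omega)$ is separable; fix an orthonormal basis $\{e_i\}_{i\geq1}$. Then $\{e_i(x)e_j(y)\}_{i,j}$ is an orthonormal basis of $L^2(\Omega\times\Omega)$, and the square-integrable kernel expands as
\[
K(x,y)=\sum_{i,j}c_{ij}\,e_i(x)\,e_j(y)\quad\text{in }L^2(\Omega\times\Omega).
\]
Truncating to $i,j\leq N$ produces a degenerate (separable) kernel $K_N(x,y)=\sum_{i,j\leq N}c_{ij}\,e_i(x)\,e_j(y)$, whose operator $K_N$ sends $L^2(\Omega)$ into $\mathrm{span}\{e_1,\ldots,e_N\}$ and is therefore of finite rank, hence compact.

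Finally I would close with the estimate from the first step: since $\|K-K_N\|_{\mathrm{op}}\leq\|K-K_N\|_{L^2(\Omega\times\Omega)}\to0$ as $N\to\infty$, the operator $K$ is the operator-norm limit of the compact operators $K_N$, and compactness is preserved under such limits. I do not expect a genuine obstacle here, as this is the classical Hilbert--Schmidt theorem; the only point deserving care is precisely the passage from $L^2$-closeness of kernels to operator-norm closeness of operators, supplied by the Cauchy--Schwarz estimate above. Without that quantitative bound one would obtain only strong (pointwise) convergence $K_N\to K$, which is not enough to transfer compactness to the limit.
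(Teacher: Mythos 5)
Your proof is correct and complete. The paper offers no proof of this lemma at all --- it is quoted directly from the reference (Lax, \emph{Functional Analysis}) --- and your argument is exactly the classical Hilbert--Schmidt argument found there: the Cauchy--Schwarz estimate $\|K\|_{\mathrm{op}}\leq\|K\|_{L^{2}(\Omega\times\Omega)}$, approximation by degenerate (finite-rank) kernels obtained by truncating the expansion in the tensor basis $\{e_i(x)e_j(y)\}$, and the norm-closedness of the compact operators, with the key point correctly identified as the passage from $L^2$-convergence of kernels to operator-norm convergence of operators.
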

Now we will apply Theorem~$\ref{20-7-27-80}$~to problem~$(\ref{wave equa})$-$(\ref{initial condition})$.

\begin{theorem}\label{polynomialdissipativity}
Under conditions~$(\ref{growth})$ and~$(\ref{dissipativity condition})$, the dynamical system $(H_0^1(\Omega)\times L^2(\Omega),\{S(t)\}_{t\geq0})$\ generated by problem~$(\ref{wave equa})$-$(\ref{initial condition})$~is polynomially decaying with respect to noncompactness measure. More precisely, there exists~$t_0>0$~such that for any bounded~$B\subseteq H_0^1(\Omega)\times L^2(\Omega)$, we have
\begin{equation*}
\begin{split}
\alpha(S(t)B)\leq &\Big\{(\alpha(\mathcal{B}_0))^{-p}+\frac{pkC_p}{2^{p+2}}\big(t-t_0-t_{*}(B)\big)\Big\}^{-\frac{1}{p}},\ \forall t\geq t_0+t_{*}(B),
\end{split}
\end{equation*}
where~$\mathcal{B}_0\subseteq H_{0}^{1}(\Omega)\times L^{2}(\Omega)$ is a positively invariant bounded absorbing set and~$t_{*}(B)$~is the entering time of~$B$~into~$\mathcal{B}_0$.

 Thus, this dynamical system possesses a polynomial attractor such that for every bounded set~$B\subseteq H_{0}^{1}(\Omega)\times L^{2}(\Omega)$,
\begin{equation}\label{21-4-24-1}
\begin{split}
\mathrm{ dist}\left(S(t)B, \mathcal{A}^*\right)\leq  &\Big\{(\alpha(\mathcal{B}_0))^{-p}+\frac{pkC_p}{2^{p+2}}\big(t-t_0-t_{*}(B)-1\big)\Big\}^{-\frac{1}{p}}
\end{split}
\end{equation}
holds for all~$t\geq t_0+t_{*}(B)+1$.
\end{theorem}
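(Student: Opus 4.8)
The plan is to apply the abstract Theorem~\ref{20-7-27-80} with $X=H_0^1(\Omega)\times L^2(\Omega)$, exponent $\beta=\frac{2}{p+2}\in(0,1)$, and a suitable fixed $T>0$, so that the entire task reduces to constructing precompact pseudometrics $\varrho_T^i$ and functions $g_1,g_2$ for which the two quasi-stability inequalities (\ref{20-8-3-20}) and (\ref{20-7-27-25}) hold on the positively invariant absorbing set $\mathcal{B}_0$ supplied by Lemma~\ref{20-9-30-1}. Fix $y_1=(u_0^1,u_1^1),\,y_2=(u_0^2,u_1^2)\in\mathcal{B}_0$, let $u^1,u^2$ be the corresponding solutions, and set $z=u^1-u^2$ and $E_z(t)=\frac12(\|z_t(t)\|^2+\|\nabla z(t)\|^2)$, so that $(d(y_1,y_2))^2=2E_z(0)$ and $(d(S(T)y_1,S(T)y_2))^2=2E_z(T)$. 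The difference $z$ solves the subtracted equation, whose forcing is the nonlinearity $f(u^1)-f(u^2)$, whose damping is $k(\|u_t^1\|^p u_t^1-\|u_t^2\|^p u_t^2)$, and whose anti-damping is the kernel term $Kz_t$. I note in advance that with $\beta=\frac{2}{p+2}$ one has $\frac{\beta}{2(\beta-1)}=-\frac1p$, which is exactly the decay exponent claimed in the statement.

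First I would run the energy identity: pairing the difference equation with $z_t$ and integrating over $[0,T]$ gives
\[
E_z(T)-E_z(0)=-\int_0^T k\big(\|u_t^1\|^p u_t^1-\|u_t^2\|^p u_t^2,\,z_t\big)\,dt-\int_0^T\big(f(u^1)-f(u^2),z_t\big)\,dt+\int_0^T(Kz_t,z_t)\,dt.
\]
By the strong-monotonicity Lemma~\ref{lemma l2.3}, applied in $H=L^2(\Omega)$ with exponent $p+2>2$, the damping integrand is bounded below by $kC_p\|z_t\|^{p+2}$, so this is a genuine dissipation. The $f$-term and $K$-term are transferred onto lower-order, compactly embedded norms: for $f$ I would integrate by parts in $t$ and invoke the mean value theorem together with the subcritical embedding $H_0^1(\Omega)\hookrightarrow\hookrightarrow L^{2q+2}(\Omega)$ granted by (\ref{growth}), controlling them by $g_1$ of the pseudometric $\varrho_T^1(y_1,y_2)=\sup_{t\in[0,T]}\|u^1(t)-u^2(t)\|+(\int_0^T\|u^1-u^2\|_{2q+2}^2\,dt)^{1/2}$; for the anti-damping I would use that $K$ is compact (Lemma~\ref{lemma 10-10-3}) and set $\varrho_T^2(y_1,y_2)=(\int_0^T\|Kz_t\|^2\,dt)^{1/2}$. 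Dropping the nonnegative dissipation then yields (\ref{20-8-3-20}), whereas retaining it yields $kC_p\int_0^T\|z_t\|^{p+2}\,dt\le E_z(0)-E_z(T)+g_1$.

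The core is the dissipation-controlled inequality (\ref{20-7-27-25}), obtained by energy reconstruction. Pairing the difference equation with the multiplier $z$ and integrating by parts bounds $\int_0^T\|\nabla z\|^2\,dt$ in terms of $\int_0^T\|z_t\|^2\,dt$, boundary terms $(z_t,z)|_0^T$, and compact quantities; adding $\int_0^T\|z_t\|^2\,dt$ controls $\int_0^T E_z(t)\,dt$. The boundary and damping cross-terms must be reorganized, by the standard almost-monotonicity device, so that only the energy drop $E_z(0)-E_z(T)$ and the compact pseudometrics survive, with no bare initial energy. Hölder's inequality in time then introduces precisely the exponent $\beta$:
\[
\int_0^T\|z_t\|^2\,dt\le T^{1-\frac{2}{p+2}}\Big(\int_0^T\|z_t\|^{p+2}\,dt\Big)^{\frac{2}{p+2}}\le C\big(E_z(0)-E_z(T)+g_1\big)^{\beta},\qquad \beta=\tfrac{2}{p+2}.
\]
Since $E_z$ is nonincreasing up to the same compact perturbation, $T\,E_z(T)\le\int_0^T E_z\,dt+(\text{compact})$, and chaining these estimates produces (\ref{20-7-27-25}) with this $\beta$ and an explicit $C$, after which the choice of $T$ fixes the numerical factor $\frac{pkC_p}{2^{p+2}}$. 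Precompactness of $\varrho_T^1$ follows from Simon's Lemma~\ref{lemma 1-11-1}: solutions are bounded in $C([0,T];H_0^1)\cap C^1([0,T];L^2)$, hence relatively compact in $C([0,T];L^2)$ and in $L^2(0,T;L^{2q+2})$; precompactness of $\varrho_T^2$ follows because $K$ sends the weakly convergent velocity differences to strongly convergent ones. With hypotheses (i)--(iii) of Theorem~\ref{20-7-27-80} verified, its conclusion gives the asserted polynomial decay of $\alpha(S(t)B)$ and the polynomial attractor $\mathcal{A}^*$ together with (\ref{21-4-24-1}).

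The step I expect to be the main obstacle is the treatment of $f(u^1)-f(u^2)$: under the subcritical growth (\ref{growth}) alone one must split it, via time integration by parts and the mean value theorem, into a part absorbed by $E_z(0)-E_z(T)$ and the dissipation and a part controlled by the precompact pseudometrics, keeping all constants uniform over $\mathcal{B}_0$ and ensuring $g_1,g_2$ are nondecreasing and continuous at the origin. This interacts delicately with the reconstruction multiplier, in particular with the reorganization of the boundary terms $(z_t,z)|_0^T$ so that the right-hand side of (\ref{20-7-27-25}) contains only the energy \emph{drop} to the power $\beta$ and no standalone $E_z(0)$. A secondary point to verify carefully is that the Hölder interpolation indeed lands on $\beta=\frac{2}{p+2}$ with the correct sign structure to reach (\ref{20-7-27-25}) rather than merely (\ref{20-8-3-20}).
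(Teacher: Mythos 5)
Your overall strategy coincides with the paper's: verify the hypotheses of Theorem~\ref{20-7-27-80} with $\beta=\frac{2}{p+2}$ (so that $\frac{\beta}{2(\beta-1)}=-\frac{1}{p}$), derive (\ref{20-8-3-20}) and (\ref{20-7-27-25}) from the energy identity together with the multiplier $z$, use the strong monotonicity Lemma~\ref{lemma l2.3} with exponent $p+2$ and H\"older/Jensen in time to convert the dissipation into $\big(E_z(0)-E_z(T)+\cdots\big)^{2/(p+2)}$, and compactify the nonlinear and kernel terms by pseudometrics fed into $g_1,g_2$. Two of your anticipated ``obstacles'' are in fact not present in the paper's route: no time integration by parts is needed for the $f$-term, since the mean value theorem, H\"older with $r=\frac{n}{(n-2)q}$, $\overline{r}=\frac{n}{n-(n-2)q}$, and the subcritical embedding give the direct bound $\|f(w)-f(v)\|\le C\|w-v\|_{2\overline{r}}$ uniformly on $\mathcal{B}_0$; and no ``almost-monotonicity device'' is needed for the boundary and damping cross-terms, which are bounded outright by $C_T\sup_{t\in[0,T]}\|z(t)\|_{2\overline{r}}$ (hence absorbed into the compact part), while the double damping integral is simply dropped because it is nonpositive by monotonicity.

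The genuine gap is your precompactness argument for the anti-damping pseudometric. You assert it ``follows because $K$ sends the weakly convergent velocity differences to strongly convergent ones,'' but compactness of $K$ on $L^{2}(\Omega)$ does not make the induced operator on the Bochner space $L^{2}(0,T;L^{2}(\Omega))$ compact: for a rank-one kernel, $Kv=(v,\phi)\psi$, the image of the unit ball of $L^{2}(0,T;L^{2})$ is $\{a(t)\psi:\ \|a\|_{L^{2}(0,T)}\le\|\phi\|\}$, which is not precompact; moreover weak convergence of $z_t^{(n)}$ in $L^{2}(0,T;L^{2})$ gives no a.e.-in-$t$ weak convergence, so there is no pointwise strong convergence of $Kz_t^{(n)}$ to integrate. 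The missing ingredient is time regularity of the velocities, which the paper extracts from the equation itself: on $\mathcal{B}_0$ one has $\int_0^T\|\mathcal{A}^{-\frac{1}{2}}w_{tt}(t)\|\,dt\le C_T$, and after introducing the completions $V$ (norm $\|\Psi(\cdot)\|+\|\mathcal{A}^{-\frac{1}{2}}\cdot\|$) and $W$ (norm $\|\mathcal{A}^{-\frac{1}{2}}\cdot\|$) with $L^{2}(\Omega)\hookrightarrow\hookrightarrow V\hookrightarrow W$ by Lemma~\ref{lemma 10-10-3}, Simon's Lemma~\ref{lemma 1-11-1}(i) applied to $\{w_t\}$ yields precompactness of $\varrho_{T}(y_1,y_2)=\int_0^T\|\Psi(z_t(t))\|\,dt$. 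This is the same device you already invoke for $\varrho_T^1$, so the repair is routine, but it cannot be replaced by the weak-to-strong shortcut as written. A secondary omission: for each fixed $T$ the argument only yields the decay constant $\frac{p}{2\left(T^{\frac{2}{p+2}}+2^{\frac{p}{p+2}+1}(kC_p)^{-\frac{2}{p+2}}\right)^{\frac{p+2}{2}}}$ valid for $t\ge t_0+2T+t_{*}(B)$; the constant $\frac{pkC_p}{2^{p+2}}$ and the threshold $t_0+t_{*}(B)$ in the statement are reached only by letting $T\to0$, using that the bound is continuous and increasing in the arbitrary parameter $T$ — no single ``choice of $T$'' produces them.
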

\begin{proof}
By Lemma~$\ref{20-9-30-1}$,~$(H_0^1(\Omega)\times L^2(\Omega),\{S(t)\}_{t\geq0})$~is dissipative. Let~$\mathcal{B}_0\subseteq H_{0}^{1}(\Omega)\times L^{2}(\Omega)$~be a positively invariant bounded absorbing set.

Write~$\Psi(u_t(t,x))=\int_{\Omega}K(x,y)u_t(t,y)dy$.
Let~$w(t),v(t)$~be two weak solutions to~$(\ref{wave equa})$-$(\ref{initial condition})$~corresponding to initial
data~$y_1,y_2\in \mathcal{B}_0$:
\begin{equation*}
(w(t),w_t(t))\equiv S(t)y_1, (v(t),v_t(t))\equiv S(t)y_2, \ y_1,y_2\in \mathcal{B}_0.
\end{equation*}
Since~$\mathcal{B}_0$~is positively invariant, we have
\begin{equation}\label{20-7-25-17}
\begin{cases}\|(w(t),w_t(t))\|_{ H^1_0(\Omega)\times L^2(\Omega)}\leq C,\\\|(v(t),v_t(t))\|_{ H^1_0(\Omega)\times L^2(\Omega)}\leq C,\end{cases}\forall t>0, y_1,y_2\in \mathcal{B}_0.
\end{equation}
The difference~$z(t)=w(t)-v(t)$~satisfies
\begin{equation}\label{20-7-25-1}
z_{tt}-\Delta z +k(||w_t||^p w_t-||v_t||^p v_t)+f(w)-f(v)=\Psi(z_t).
 \end{equation}
Multiplying~$(\ref{20-7-25-1})$~by~$z_t$~in~$L^2(\Omega)$, we obtain
\begin{equation}\label{20-7-25-2}
\begin{split}
&\frac{1}{2}\frac{d}{dt}(||z_t||^2+||\nabla z||^2) +k(||w_t||^p w_t-||v_t||^p v_t,z_t)\\=&(\Psi(z_t),z_t)-(f(w)-f(v),z_t).
\end{split}
 \end{equation}
Let~$T$~be an arbitrary positive constant.
Integrating~$(\ref{20-7-25-2})$~from~$t$~to~$T$, we obtain
 \begin{equation}\label{20-7-25-3}
\begin{split}
E_z(t)=&E_z(T)+k\int_t^T(||w_t||^p w_t-||v_t||^p v_t,z_t)d\tau\\&+\int_t^T(f(w)-f(v),z_t)d\tau-\int_t^T(\Psi(z_t),z_t)d\tau,
\end{split}
 \end{equation}
 where~$E_z(t)=\frac{1}{2}(||z_t(t)||^2+||\nabla z(t)||^2)$.

Integrating~$(\ref{20-7-25-3})$~from~$0$~to~$T$~yields
 \begin{equation}\label{20-7-25-4}
\begin{split}
TE_z(T)=&\int_0^TE_z(t)dt-\int_0^T\int_t^Tk(||w_t||^p w_t-||v_t||^p v_t,z_t)d\tau dt\\&-\int_0^T\int_t^T(f(w)-f(v),z_t)d\tau dt +\int_0^T\int_t^T(\Psi(z_t),z_t)d\tau dt.
\end{split}
 \end{equation}
Multiplying~$(\ref{20-7-25-1})$~by~$z$~in~$L^2(\Omega)$~gives
 \begin{equation}\label{20-7-25-5}
\begin{split}
\frac{1}{2}(||z_t||^2+||\nabla z||^2) =&-\frac{1}{2}\frac{d}{dt}(z_t,z)+||z_t||^2-\frac{1}{2}(f(w)-f(v),z)\\&+\frac{1}{2}(\Psi(z_t),z)-\frac{k}{2}(||w_t||^p w_t-||v_t||^p v_t,z).
\end{split}
\end{equation}
Integrating~$(\ref{20-7-25-5})$~from~$0$~to~$T$~yields
 \begin{equation}\label{20-7-25-6}
\begin{split}
\int_0^T E_z(t)dt=&-\frac{1}{2}(z_t,z)|^T_0+ \int_0^T ||z_t||^2 dt- \frac{1}{2} \int_0^T(f(w)-f(v),z) dt \\&+  \frac{1}{2} \int_0^T(\Psi(z_t),z)  dt- \frac{k}{2} \int_0^T(||w_t||^p w_t-||v_t||^p v_t,z)dt.
\end{split}
\end{equation}
Substituting~$(\ref{20-7-25-6})$~into~$(\ref{20-7-25-4})$, we have
 \begin{equation}\label{20-7-25-7}
\begin{split}
TE_z(T)=&-\frac{1}{2}(z_t,z)|^T_0+ \int_0^T ||z_t||^2 dt- \frac{1}{2} \int_0^T(f(w)-f(v),z) dt \\&+  \frac{1}{2} \int_0^T(\Psi(z_t),z)  dt- \frac{k}{2} \int_0^T(||w_t||^p w_t-||v_t||^p v_t,z)dt\\&-\int_0^T\int_t^Tk(||w_t||^p w_t-||v_t||^p v_t,z_t)d\tau dt\\&-\int_0^T\int_t^T(f(w)-f(v),z_t)d\tau dt +\int_0^T\int_t^T(\Psi(z_t),z_t)d\tau dt.
\end{split}
 \end{equation}
Using~$(\ref{20-7-25-17})$, we estimate the terms on the right hand side of identity~$(\ref{20-7-25-7})$~as follows.

By~$(\ref{growth})$, we have
\begin{equation}\label{20-11-12-5}
\begin{split}
 &\left[\int_{\Omega}\big(f(w)-f(v)\big)^{2}dx\right]^{1/2} \\ =&\left\{\int_{\Omega}\left[\int_{0}^{1}f'\big(v+\theta(w-v)\big)(w-v)d\theta\right]^{2}dx\right\}^{1/2}\\
   \leq & \left\{\int_{\Omega}\left[\int_{0}^{1}M\Big(|v+\theta(w-v)|^{q}+1\Big)|w-v|d\theta\right]^{2}dx\right\}^{1/2} \\
   \leq & C\left\{\int_{\Omega}\big[(|v|^{q}+|w|^{q}+1)|w-v|\big]^{2}dx\right\}^{1/2}\\
   \leq & C\left\{\int_{\Omega}(|v|^{2q}+|w|^{2q}+1)|w-v|^{2}dx\right\}^{1/2} \\
   \leq &  C\left\{\left(\int_{\Omega}|v|^{2q}|w-v|^{2}dx\right)^{1/2}+\left(\int_{\Omega}|w|^{2q}|w-v|^{2}dx\right)^{1/2}+\left(\int_{\Omega}|w-v|^{2}dx\right)^{1/2}\right\}.
\end{split}\end{equation}
When~$n\geq3$, we take~$r=\frac{n}{(n-2)q}$,~$\overline{r}=\frac{n}{n-(n-2)q}$, then by~$q<\frac{2}{n-2}$, it is obvious that ~$\displaystyle\frac{1}{r}+\displaystyle\frac{1}{\overline{r}}=1$, $2qr=\displaystyle\frac{2n}{n-2}$~and ~$2\overline{r}<\displaystyle\frac{2n}{n-2}$. When~$n\leq2$, we take~$r=\overline{r}=2$. Thus for any~$n\in \mathds{N}$~we have~$H_{0}^{1}(\Omega)\hookrightarrow L^{2qr}(\Omega)$~and~$H_{0}^{1}(\Omega)\hookrightarrow \hookrightarrow L^{2\overline{r}}(\Omega)$.

Hence, combining~$(\ref{20-7-25-17})$~and~$(\ref{20-11-12-5})$~gives
\begin{equation}\label{20-7-25-30}
\begin{split}
 &\left[\int_{\Omega}\big(f(w)-f(v)\big)^{2}dx\right]^{1/2} \\
 \leq&C\Bigg\{\left(\int_{\Omega}|w|^{2qr}dx\right)^{\frac{1}{2r}}\left(\int_{\Omega}|w-v|^{2\overline{r}}dx\right)^{\frac{1}{2\overline{r}}}\\&+\left(\int_{\Omega}|v|^{2qr}dx\right)^{\frac{1}{2r}}\left(\int_{\Omega}|w-v|^{2\overline{r}}dx\right)^{\frac{1}{2\overline{r}}} +\left(\int_{\Omega}|w-v|^{2}dx\right)^{1/2}\Bigg\}\\
   \leq &C\cdot\Big( \|\nabla v\|^{q}+\|\nabla w\|^{q}+1\Big) \|w-v\|_{2\overline{r}}
   \\
   \leq &C\|w-v\|_{2\overline{r}}.
\end{split}\end{equation}
Consequently, we have
    \begin{equation}\label{20-7-25-12}
\begin{split}
-\int_0^T\int_t^T(f(w)-f(v),z_t)d\tau dt\leq & T^2\sup_{t\in[0,T]}\|f(w(t))-f(v(t))\|\sup_{t\in[0,T]}\|z_t(t)\|\\
\leq & T^2C\sup_{t\in[0,T]}\|f(w(t))-f(v(t))\|\\
\leq &T^2C\sup_{t\in[0,T]}\|w(t)-v(t)\|_{2\overline{r}}
\end{split}
 \end{equation}
and
  \begin{equation}\label{20-7-25-8}
\begin{split}
- \frac{1}{2} \int_0^T(f(w)-f(v),z) dt &\leq \frac{T}{2}\sup_{t\in[0,T]}\|f(w(t))-f(v(t))\|\sup_{t\in[0,T]}\|z(t)\|\\
&\leq TC\sup_{t\in[0,T]}\|z(t)\|.
\end{split}
 \end{equation}
By Lemma~$\ref{lemma l2.3}$~we have
 \begin{equation*}
(||w_t||^p w_t-||v_t||^p v_t,w_t-z_t)\geq C_p||w_t-v_t||^{p+2},
 \end{equation*}
 thus
  \begin{equation}\label{20-8-2-21}
||w_t-v_t||^{2}\leq C_p^{-\frac{2}{p+2}} (||w_t||^p w_t-||v_t||^p v_t,w_t-v_t)^{\frac{2}{p+2}}.
 \end{equation}
We deduce from~$(\ref{20-8-2-21})$~and the concavity of~$g(s)=s^{\frac{2}{p+2}}(s>0)$~that
   \begin{equation}\label{20-7-25-14}
   \begin{split}
    \int_0^T ||z_t||^2 dt\leq &C_p^{-\frac{2}{p+2}}\int_0^T (||w_t||^p w_t-||v_t||^p v_t,w_t-v_t)^{\frac{2}{p+2}}dt\\
    \leq &C_p^{-\frac{2}{p+2}}T \Big(\frac{1}{T}\int_0^T(||w_t||^p w_t-||v_t||^p v_t,w_t-v_t)dt\Big)^{\frac{2}{p+2}}\\
    =&C_p^{-\frac{2}{p+2}}T^{\frac{p}{p+2}}\Big(\int_0^T(||w_t||^p w_t-||v_t||^p v_t,w_t-v_t)dt\Big)^{\frac{2}{p+2}}.
 \end{split}\end{equation}
Taking~$t=0$~in~$(\ref{20-7-25-3})$~yields
 \begin{equation}\label{20-7-25-15}
\begin{split}
&\int_0^T(||w_t||^p w_t-||v_t||^p v_t,w_t-v_t)dt\\=&\frac{1}{k}\left(E_z(0)-E_z(T)-\int_0^T(f(w)-f(v),z_t)d\tau+\int_0^T(\Psi(z_t),z_t)d\tau\right).
\end{split}\end{equation}
We infer from~$(\ref{20-7-25-17})$,~$(\ref{20-7-25-30})$,~$(\ref{20-7-25-14})$~and~$(\ref{20-7-25-15})$~that
 \begin{equation}\label{20-7-25-16}
   \begin{split}
    \int_0^T ||z_t||^2 dt\leq &(kC_p)^{-\frac{2}{p+2}}T^{\frac{p}{p+2}}\bigg(E_z(0)-E_z(T)\\&-\int_0^T(f(w)-f(v),z_t)d\tau+\int_0^T(\Psi(z_t),z_t)d\tau\bigg)^{\frac{2}{p+2}}\\
    \leq&(kC_p)^{-\frac{2}{p+2}}T^{\frac{p}{p+2}}\bigg(E_z(0)-E_z(T)\\&+TC\sup_{t\in[0,T]}\|w(t)-v(t)\|_{2\overline{r}}+ C\int_0^T\|\Psi(z_t(t))\|dt\bigg)^{\frac{2}{p+2}}.
 \end{split}\end{equation}
We proceed to give the estimates for the rest terms:
 \begin{equation}\label{20-7-27-70}
\begin{split}
-\frac{1}{2}(z_t,z)|^T_0\leq &\frac{1}{2}\big(\|z_t(T)\|\|z(T)\|+\|z_t(0)\|\|z(0)\|\big)\\
\leq &C\big(\|z(T)\|+\|z(0)\|\big)\\
\leq &C\sup_{t\in[0,T]}\|z(t)\|,
\end{split}
 \end{equation}
  \begin{equation}\label{20-7-25-9}
\begin{split}
 \frac{1}{2} \int_0^T(\Psi(z_t),z)  dt&\leq \frac{T}{2}\sup_{t\in[0,T]}\|\Psi(z_t(t))\|\sup_{t\in[0,T]}\|z(t)\|\\
 &\leq \frac{T}{2}\|K\|_{L^2(\Omega\times\Omega)}\sup_{t\in[0,T]}\|z_t(t)\|\sup_{t\in[0,T]}\|z(t)\|\\
&\leq TC\sup_{t\in[0,T]}\|z(t)\|,
\end{split}
 \end{equation}
  \begin{equation}\label{20-7-25-10}
\begin{split}
- \frac{k}{2} \int_0^T(||w_t||^p w_t-||v_t||^p v_t,z)dt\leq TC\sup_{t\in[0,T]}\|z(t)\|,
\end{split}
 \end{equation}
   \begin{equation}\label{20-7-25-11}
\begin{split}
-\int_0^T\int_t^Tk(||w_t||^p w_t-||v_t||^p v_t,z_t)d\tau dt\leq0,
\end{split}
 \end{equation}
  \begin{equation}\label{20-7-25-13}
\begin{split}
\int_0^T\int_t^T(\Psi(z_t),z_t)d\tau dt\leq&\int_0^T\int_0^T\left|(\Psi(z_t),z_t)\right|d\tau dt\\ \leq & T\sup_{t\in[0,T]}\|z_t(t)\|\int_0^T\|\Psi(z_t(t))\|dt\\
\leq &TC\int_0^T\|\Psi(z_t(t))\|dt.
\end{split}
 \end{equation}
Plugging~$(\ref{20-7-25-12})$,~$(\ref{20-7-25-8})$,~$(\ref{20-7-25-16})$,~$(\ref{20-7-27-70})$,~$(\ref{20-7-25-9})$,~$(\ref{20-7-25-10})$,~$(\ref{20-7-25-11})$~and~$(\ref{20-7-25-13})$~ into~$(\ref{20-7-25-7})$, we obtain
  \begin{equation}\label{20-7-26-1}
   \begin{split}
   E_z(T)\leq&C_{T}\Big(\sup_{t\in[0,T]}\|z(t)\|+\sup_{t\in[0,T]}\|z(t)\|_{2\overline{r}}+\int_0^T\|\Psi(z_t(t))\|dt\Big)\\&+ (kTC_p)^{-\frac{2}{p+2}}\bigg(E_z(0)-E_z(T)\\&+TC\sup_{t\in[0,T]}\|z(t)\|_{2\overline{r}}+ C\int_0^T\|\Psi(z_t(t))\|dt\bigg)^{\frac{2}{p+2}}\\\leq&C_{T}\Big(\sup_{t\in[0,T]}\|z(t)\|_{2\overline{r}}+\int_0^T\|\Psi(z_t(t))\|dt\Big)\\&+ (kTC_p)^{-\frac{2}{p+2}}\bigg(E_z(0)-E_z(T)\\&+TC\sup_{t\in[0,T]}\|z(t)\|_{2\overline{r}}+ C\int_0^T\|\Psi(z_t(t))\|dt\bigg)^{\frac{2}{p+2}}.
 \end{split}\end{equation}
 Since~$H_{0}^{1}(\Omega)\hookrightarrow \hookrightarrow L^{2\overline{r}}(\Omega)\hookrightarrow L^{2}(\Omega)$, it follows from Lemma~$\ref{lemma 1-11-1}$ and inequality~$(\ref{20-7-25-17})$ that~$\rho_{T}(y_1,y_2)=\sup_{t\in[0,T]}\|z(t)\|_{2\overline{r}}$ is precompact on~$\mathcal{B}_0$.

Let~$\mathcal{A}$~denote the strictly positive operator on~$L^2(\Omega)$~defined by~$\mathcal{A}=-\triangle$~with domain~$D(\mathcal{A})=H^2(\Omega)\cap H^1_0(\Omega)$.
Let~$V$~be the completion of~$L^2(\Omega)$~with respect to the norm~$\|\cdot\|_{V}$~given by~$\|\cdot\|_V=\|\Psi(\cdot)\|+\|\mathcal{A}^{-\frac{1}{2}}\cdot\|$~and~$W$~be the completion of~$L^2(\Omega)$~with respect to the norm~$\|\cdot\|_{W}$~given by~$\|\cdot\|_W=\|\mathcal{A}^{-\frac{1}{2}}\cdot\|$.

By Lemma~$\ref{lemma 10-10-3}$, we have
\begin{equation}\label{20-7-27-66}
 L^2(\Omega)\hookrightarrow\hookrightarrow V\hookrightarrow W.
\end{equation}
It follows from~$(\ref{20-7-25-30})$~that
\begin{equation}\label{20-7-27-67}
\begin{split}
  \|f(w(t))\|\leq C.
\end{split}\end{equation}
In addition, we have
\begin{equation}\label{20-7-27-68}
\begin{split}
\|\Psi(w_{t}(t))\|\leq \|K\|_{L^2(\Omega\times\Omega)}\|w_{t}(t)\|\leq C.
\end{split}\end{equation}
We infer from~$(\ref{wave equa})$,~$(\ref{20-7-27-67})$~and~$(\ref{20-7-27-68})$~that
\begin{equation*}
\begin{split}
  \|\mathcal{A}^{-\frac{1}{2}}w_{tt}(t)\|\leq&\|\nabla w(t)\|+k\|w_{t}(t)\|^p\|\mathcal{A}^{-\frac{1}{2}}w_{t}(t)\|\\
  &+\|\mathcal{A}^{-\frac{1}{2}}\big(\Psi(w_{t}(t))+h-f(w(t))\big)\|\\ \leq &C.
  \end{split}
\end{equation*}
Hence,
\begin{equation}\label{20-7-27-71}
   \int_0^T\|\mathcal{A}^{-\frac{1}{2}}w_{tt}(t)\|dt\leq C_{T}.
\end{equation}
Besides, we have
\begin{equation}\label{20-7-27-77}
  \int_0^T\|w_{t}(t)\|dt\leq C_{T}.
\end{equation}
By Lemma~$\ref{lemma 1-11-1}$,~$(\ref{20-7-27-66})$,~$(\ref{20-7-27-71})$~and~$(\ref{20-7-27-77})$~imply that ~$\varrho_{T}(y_1,y_2)=\int_0^T\|\Psi(z_t(t))\|dt$~is precompact on~$\mathcal{B}_0$.

Thus by Theorem~$\ref{20-7-27-80}$, we deduce from~$(\ref{20-7-26-1})$~that there exists~$t_0>0$~such that for any bounded~$B\subseteq X$,
\begin{equation}\label{20-7-28-1}
\begin{split}
\alpha(S(t)B)\leq &\Big\{(\alpha(\mathcal{B}_0))^{-p}+\frac{p}{2\big(T^{\frac{2}{p+2}}+2^{\frac{p}{p+2}+1}(kC_p)^{-\frac{2}{p+2}}\big)^{\frac{p+2}{2}}}\big(t-t_0-2T-t_{*}(B)\big)\Big\}^{-\frac{1}{p}}
\end{split}
\end{equation}
holds for all~$t\geq t_0+2T+t_{*}(B)$,
where~$t_{*}(B)$~satisfies
\begin{equation*}
  S(t)B\subseteq \mathcal{B}_0, \ \forall t\geq t_{*}(B).
\end{equation*}
Since~$\Big\{(\alpha(\mathcal{B}_0))^{-p}+\frac{p}{2\big(T^{\frac{2}{p+2}}+2^{\frac{p}{p+2}+1}(kC_p)^{-\frac{2}{p+2}}\big)^{\frac{p+2}{2}}}\big(t-t_0-2T-t_{*}(B)\big)\Big\}^{-\frac{1}{p}}$~is continuous and increasing with respect to~$T$, where~$T$~is an arbitrary positive constant, by taking~$T\rightarrow0$~in~$(\ref{20-7-28-1})$~we have
\begin{equation*}
\begin{split}
\alpha(S(t)B)\leq &\Big\{(\alpha(\mathcal{B}_0))^{-p}+\frac{pkC_p}{2^{p+2}}\big(t-t_0-t_{*}(B)\big)\Big\}^{-\frac{1}{p}}
\end{split}
\end{equation*}
for all~$t> t_0+t_{*}(B)$.

 Thus, the dynamical system~$(H_0^1(\Omega)\times L^2(\Omega),\{S(t)\}_{t\geq0})$ generated by problem~$(\ref{wave equa})$-$(\ref{initial condition})$~possesses a polynomial attractor such that for every bounded set~$B\subseteq H_{0}^{1}(\Omega)\times L^{2}(\Omega)$,
\begin{equation}
\begin{split}
\mathrm{ dist}\left(S(t)B, \mathcal{A}^*\right)\leq  &\Big\{(\alpha(\mathcal{B}_0))^{-p}+\frac{pkC_p}{2^{p+2}}\big(t-t_0-t_{*}(B)-1\big)\Big\}^{-\frac{1}{p}}
\end{split}
\end{equation}
holds for all~$t\geq t_0+t_{*}(B)+1$.
The proof is completed.
 \end{proof}
 \begin{remark}
As can be seen from formula $(\ref{21-4-24-1})$, when the kinetic energy is very small, the smaller~$p$  is, the faster the energy dissipates, and the higher the attractive speed of~$\mathcal{A}^*$ is. When~$p\rightarrow 0$, the damping  approaches linear damping, and the attractive speed of~$\mathcal{A}^*$ approaches exponential speed. This result is consistent with our intuitive prediction.
\end{remark}

%% The Appendices part is started with the command \appendix;
%% appendix sections are then done as normal sections
%% \appendix

%% \section{}
%% \label{}
%%
%% Following citation commands can be used in the body text:
%% Usage of \cite is as follows:
%%   \cite{key}          ==>>  [#]
%%   \cite[chap. 2]{key} ==>>  [#, chap. 2]
%%   \citet{key}         ==>>  Author [#]

%% References with bibTeX database:

 \section*{References }
\bibliographystyle{plain}

%% Authors are advised to submit their bibtex database files. They are
%% requested to list a bibtex style file in the manuscript if they do
%% not want to use model1a-num-names.bst.

%% References without bibTeX database:

% \begin{thebibliography}{00}
%
% \bibitem must have the following form:
%   \bibitem{key}...

 %\bibitem{}

\end{document}